\newtheorem{thm}{Theorem}[section]
\newtheorem{lem}[thm]{Lemma}
\newtheorem{cor}[thm]{Corollary}
\newtheorem{prop}[thm]{Proposition}
\numberwithin{equation}{section}
\theoremstyle{definition}
\begin{document}

\title[Third Power of the Reversed Dickson Polynomial]{Third Power of the Reversed Dickson Polynomial over Finite Fields}

\author{Xiang-dong Hou}
\address{Department of Mathematics and Statistics,
University of South Florida, Tampa, FL 33620}
\email{xhou@cas.usf.edu}

\keywords{finite field, permutation polynomial, reversed Dickson polynomial}

\subjclass{}
 
\begin{abstract}
Let $D_n(1,x)$ be the $n$th reversed Dickson polynomial. The power sums $\sum_{a\in\Bbb F_q}D_n(1,a)^i$, $i=1,2$, have been determined recently. In this paper we give an evaluation of the sum $\sum_{a\in\Bbb F_q}D_n(1,a)^3$. This result implies new necessary conditions for $D_n(1,x)$ to be a permutation polynomial over $\Bbb F_q$.
\end{abstract}

\maketitle


\section{Introduction}

Let $n\ge 0$ be an integer and let $\Bbb F_q$ denote the finite field with $q$ elements. The $n$th {\em reversed Dickson polynomial} $D_n(1,x)\in\Bbb Z[x]$ is defined by the functional equation
\[
D_n(1,x(1-x))=x^n+(1-x)^n.
\]
Naturally, $D_n(1,x)$ can be viewed as a polynomial over $\Bbb F_q$. The reversed Dickson polynomial is a descendant of the polynomial $D_n(x,y)\in\Bbb Z[x,y]$ defined by the functional equation
\[
D_n(x+y,xy)=x^n+y^n.
\]
The other descendant of $D_n(x,y)$ is the well known {\em Dickson polynomial} $D_n(x,a)\in \Bbb F_q[x]$ where $a\in\Bbb F_q$. While Dickson polynomials have been the focus of many researchers for over a century (cf. \cite{LMT}), the significance of reversed Dickson polynomials over finite fields was not clear until some ten years ago. In \cite{Dillon02}, Dillon explored a connection between reversed Dickson polynomials that are permutations of $\Bbb F_{2^m}$ and {\em almost perfect nonlinear} (APN) functions over $\Bbb F_{2^m}$. A more comprehensive approach to reversed Dickson polynomials as permutation polynomials over finite fields appeared in a recent paper \cite{HMSY}. We are interested in the pairs $(q,n)$ for which $D_n(1,x)$ is a permutation polynomial over $\Bbb F_q$ and we call such pairs {\em desirable} \cite{Hou10}. As explained in the introduction of \cite{HL}, when searching for desirable pairs $(q,n)$, we may assume $1\le n\le q^2-2$. All known families (ten families) of desirable pairs are listed in Table 1 of \cite{HL}. Computer search has confirmed that there are no other desirable pairs for $q\le 401$. So the big open question is whether the list of known desirable pairs is complete. Any new addition to the list would be extremely interesting since most families in the list are already highly nontrivial. Another way to attack the problem is to find new necessary conditions for a pair to be desirable. It is well known that a function $f:\Bbb F_q\to \Bbb F_q$ is bijective if and only if 
\[
\sum_{a\in\Bbb F_q}f(a)^i
\begin{cases}
=0&\text{if}\ 1\le i\le q-2,\cr
\ne 0&\text{if}\ i=q-1.
\end{cases}
\]
Therefore, an explicit evaluation of the sum $\sum_{a\in\Bbb F_q}D_n(1,a)^i$ for any $1\le i\le q-1$ would provide necessary conditions for $(q,n)$ to be desirable. The sums
$\sum_{a\in\Bbb F_q}D_n(1,a)^i$, $i=1,2$, have been determined in \cite{HL} for this purpose. In the present paper, we give an explicit evaluation of the sum $\sum_{a\in\Bbb F_q}D_n(1,a)^3$. As expected, this result implies new necessary conditions for $(q,n)$ to be desirable.

In Section 2 we recall certain results from \cite{HL} to be used in the present paper. The evaluation of the sum $\sum_{a\in\Bbb F_q}D_n(1,a)^3$ requires 
separate treatments of
the even $q$ case and the odd $q$ case. These two cases are covered in Sections 3 and 4 respectively.


\section{Preliminaries}

We first prove a lemma

\begin{lem}\label{L2.1}
Let $q=p^e$, where $p$ is a prime and $e>0$. Let $0\le \alpha, v\le q-1$. Then in $\Bbb F_p$,
\[
\binom{q-1+\alpha-v}\alpha=(-1)^\alpha\binom v\alpha.
\]
\end{lem}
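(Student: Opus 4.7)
My plan is to interpret the congruence as an identity of coefficients in the formal power series ring $\mathbb{F}_p[[x]]$, making use of the Frobenius identity $(1+x)^q = 1 + x^q$ in characteristic $p$. Since $1+x$ is a unit in $\mathbb{F}_p[[x]]$, the power $(1+x)^n$ is well defined for every integer $n\in\mathbb{Z}$, and its coefficient of $x^k$ equals the generalized binomial $\binom{n}{k}$ reduced modulo $p$.

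The key manipulation is
\[
(1+x)^{q-1+\alpha-v} \;=\; (1+x)^q\,(1+x)^{\alpha-v-1} \;=\; (1+x^q)(1+x)^{\alpha-v-1}
\]
in $\mathbb{F}_p[[x]]$. Extracting the coefficient of $x^\alpha$, the left side gives $\binom{q-1+\alpha-v}{\alpha}\bmod p$. On the right, the contribution coming from the $x^q$ summand is $[x^{\alpha-q}](1+x)^{\alpha-v-1}$, which vanishes because $\alpha\le q-1<q$. So the right side collapses to the single generalized binomial coefficient $\binom{\alpha-v-1}{\alpha}$.

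To finish, I would apply the upper negation identity $\binom{r}{k}=(-1)^k\binom{k-r-1}{k}$, valid for any integer $r$ and nonnegative integer $k$, with $r=\alpha-v-1$ and $k=\alpha$. This gives $\binom{\alpha-v-1}{\alpha}=(-1)^\alpha\binom{v}{\alpha}$, yielding exactly the claimed congruence.

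The point requiring the most care is the meaning of $(1+x)^{\alpha-v-1}$ when $\alpha-v-1<0$, namely as a genuine formal power series obtained from $(1+x)^{-1}=\sum_{k\ge 0}(-1)^k x^k$, and the verification that the exponent law $(1+x)^{q}(1+x)^{\alpha-v-1}=(1+x)^{q-1+\alpha-v}$ and the coefficient formula $[x^k](1+x)^n=\binom{n}{k}\bmod p$ hold for all $n\in\mathbb{Z}$. Both follow from the standard structure of the unit group $1+x\mathbb{F}_p[[x]]$ under multiplication; once this is granted, the proof is essentially the single line above. I expect no obstacle beyond being explicit about working with possibly negative exponents.
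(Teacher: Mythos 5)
Your proof is correct, but it follows a genuinely different route from the paper's. The paper works digit by digit in base $p$: it expands $\alpha$ and $v$ as $\sum\alpha_ip^i$ and $\sum v_ip^i$, disposes of the case $v_i<\alpha_i$ (where both sides vanish, the left because the addition $\alpha+(q-1-v)$ carries in base $p$), and otherwise applies Lucas' theorem to reduce to the single-digit identity $\binom{p-1+\alpha_i-v_i}{\alpha_i}=(-1)^{\alpha_i}\binom{v_i}{\alpha_i}$, finally recombining the signs via $(-1)^{\sum\alpha_i}=(-1)^\alpha$. You instead package the whole computation into one coefficient extraction in $\Bbb F_p[[x]]$, using $(1+x)^q=1+x^q$, the vanishing of the $x^{\alpha-q}$ contribution because $\alpha\le q-1$, and upper negation $\binom{\alpha-v-1}{\alpha}=(-1)^\alpha\binom v\alpha$. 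All the ingredients you flag (integer-valued generalized binomial coefficients for negative upper index, the exponent law in the unit group $1+x\Bbb F_p[[x]]$, and $q-1+\alpha-v\ge 0$ so that the left side is an ordinary binomial coefficient) check out, and both hypotheses $\alpha\le q-1$ and $v\le q-1$ are used where they should be. Your argument is arguably cleaner in that it avoids the case split and the digit bookkeeping entirely; the paper's argument is more self-contained for a reader who has Lucas' and Kummer's theorems at hand but does not want to set up formal power series with negative exponents.
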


\begin{proof}
Write $\alpha=\sum_{i=0}^{e-1}\alpha_ip^i$, $v=\sum_{i=0}^{e-1}v_ip^i$, $0\le \alpha_i, v_i\le p-1$. If $v_i<\alpha_i$ for some $i$, then $\binom v\alpha=0$ and $\binom{q-1+\alpha-v}\alpha=0$. (The second equation holds since the sum $\alpha+(q-1-v)$ has carry in base $p$.) Now assume $v_i\ge \alpha_i$ for all $0\le i\le e-1$. Then
\[
\binom{q-1+\alpha-v}\alpha =\prod_{i=0}^{e-1}\binom{p-1+\alpha_i-v_i}{\alpha_i}=\prod_{i=0}^{e-1}(-1)^{\alpha_i}\binom{v_i}{\alpha_i}
=(-1)^\alpha\binom v\alpha.
\]
\end{proof}

Let $d_n=D_n(1,x)$. Since
\[
\begin{split}
d_n(x(1-x))^3\;&=\bigl[x^n+(1-x)^n\bigr]^3\cr
&=x^{3n}+(1-x)^{3n}+3\bigl[x(1-x)\bigr]^n\bigl[x^n+(1-x)^n\bigr]\cr
&=d_{3n}(x(1-x))+3\bigl[x(1-x)\bigr]^nd_n(x(1-x)),
\end{split}
\]
we have
\begin{equation}\label{2.1}
d_n^3=d_{3n}+3x^nd_n.
\end{equation}
Therefore
\begin{equation}\label{2.2}
\sum_{a\in\Bbb F_q}d_n(a)^3=\sum_{a\in\Bbb F_q}d_{3n}(a)+3\sum_{a\in\Bbb F_q}a^nd_n(a).
\end{equation}
In \eqref{2.2}, the sum $\sum_{a\in\Bbb F_q}d_{3n}(a)$ has been determined in \cite{HL}; the goal of the present paper is to evaluate the sum $\sum_{a\in\Bbb F_q}a^nd_n(a)$.

By (4.3) of \cite{HL},
\begin{equation}\label{2.3}
\sum_{n=1}^{q^2-1}d_nt^n\equiv\frac{t(t^{q^2-1}-1)}{t-1}+h(t)\sum_{k=1}^{q-1}(t-1)^{q-1-k}t^{2k}x^k\pmod{x^q-x},
\end{equation}
where
\begin{equation}\label{2.4}
h(t)=\frac{(t-2)(t^{q^2-1}-1)}{(t^{q-1}-1)(t^q-t^{q-1}-1)}\in\Bbb F_p[t]\qquad (p=\text{char}\;\Bbb F_q).
\end{equation}
Summing both sides of \eqref{2.3} as $x$ runs over $\Bbb F_q$, we get
\begin{equation}\label{2.5}
\sum_{n=2(q-1)}^{q^2-1}\Bigl(\sum_{a\in\Bbb F_q}d_n(a)\Bigr)t^n=-t^{2(q-1)}h(t).
\end{equation}

\begin{lem}[{\cite [Proposition 4.1]{HL}}]\label{L2.2}
Assume that $q$ is even. Then
\begin{equation}\label{2.6}
h(t)=\sum_{\substack{\alpha,\beta\ge 0\cr \alpha+\beta\le q-2}}\binom{\alpha+\beta}\alpha t^{(q-1)^2-(\alpha+\beta q)}.
\end{equation}
\end{lem}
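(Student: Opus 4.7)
Since $q$ is even we have $p=2$, and in $\Bbb F_2[t]$ every minus sign becomes a plus while $t-2=t$, so by \eqref{2.4}
\[
h(t)=\frac{t\,(t^{q^2-1}+1)}{(t^{q-1}+1)(t^q+t^{q-1}+1)}.
\]
The crucial first step is to recognize the right-hand side of \eqref{2.6} as a geometric series in disguise. Grouping terms by $k=\alpha+\beta$, the inner sum $\sum_{\alpha+\beta=k}\binom{k}{\alpha}t^{-\alpha-\beta q}$ is exactly the binomial expansion of $(t^{-1}+t^{-q})^k$, which gives
\[
\text{RHS of \eqref{2.6}}\;=\;t^{(q-1)^2}\sum_{k=0}^{q-2}u^k,\qquad u:=t^{-1}+t^{-q}.
\]

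Next I would sum the geometric series using $\sum_{k=0}^{q-2}u^k=(u^{q-1}+1)/(u+1)$ (valid in characteristic $2$) and evaluate $u+1$ and $u^{q-1}$ in closed form. Writing $u=t^{-q}(t^{q-1}+1)$, one immediately obtains $u+1=t^{-q}(t^q+t^{q-1}+1)$. For $u^{q-1}$, one has $u^{q-1}=t^{-q(q-1)}(t^{q-1}+1)^{q-1}$, and invoking the Frobenius identity $(1+y)^{q-1}=(1+y^q)/(1+y)$ with $y=t^{q-1}$ yields
\[
u^{q-1}\;=\;\frac{1+t^{-q(q-1)}}{t^{q-1}+1}.
\]
Adding $1$ and clearing the common denominator produces $u^{q-1}+1=t^{-q(q-1)}(t^{q^2-1}+1)/(t^{q-1}+1)$.

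Finally I would assemble the pieces: dividing $u^{q-1}+1$ by $u+1$ and multiplying by $t^{(q-1)^2}$, the exponents collapse via $(q-1)^2-q(q-1)+q=1$, yielding exactly
\[
\frac{t\,(t^{q^2-1}+1)}{(t^{q-1}+1)(t^q+t^{q-1}+1)}\;=\;h(t).
\]
The main obstacle is largely bookkeeping; there is no deeper idea beyond the two reductions above. The one point requiring care is that the manipulations take place in the field of rational functions (so that $u+1\ne0$ makes sense), even though both sides of \eqref{2.6} are actually polynomials in $t$.
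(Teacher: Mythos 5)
Your argument is correct, but note that the paper itself gives no proof of Lemma~\ref{L2.2} at all: it is imported verbatim as Proposition~4.1 of \cite{HL}. So your proof is necessarily a different route from "the paper's"; the fair comparison is with how the paper proves the odd-$q$ analogue, Lemma~\ref{L2.3}. There the author works backwards from the generating-function identity \eqref{2.5}, plugging in the already-known values of $\sum_{a\in\Bbb F_q}d_n(a)$ from \cite{HL} to read off the coefficients of $h(t)$; that method needs the external inputs \cite[Proposition 2.1, Theorem 3.1]{HL}. Your proof is instead a direct, self-contained verification of the rational-function identity: collapsing the double sum to $t^{(q-1)^2}\sum_{k=0}^{q-2}u^k$ with $u=t^{-1}+t^{-q}$, summing the geometric series in $\Bbb F_2(t)$, and using $(1+y)^{q-1}=(1+y^q)/(1+y)$ to simplify $u^{q-1}$. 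I checked the exponent bookkeeping: $u+1=t^{-q}(t^q+t^{q-1}+1)$, $u^{q-1}+1=t^{-q(q-1)}(t^{q^2-1}+1)/(t^{q-1}+1)$, and $(q-1)^2-q(q-1)+q=1$ all come out right, so the quotient is exactly $t(t^{q^2-1}+1)/\bigl[(t^{q-1}+1)(t^q+t^{q-1}+1)\bigr]=h(t)$. Your closing remark is the one genuinely delicate point and you handle it correctly: the computation lives in $\Bbb F_2(t)$, where $u+1\ne 0$, and equality of the two polynomials as rational functions implies equality in $\Bbb F_2[t]$. What your approach buys is independence from \cite{HL}; what the paper's (odd-case) approach buys is uniformity with the rest of Section~2 and reuse of results it needs anyway.
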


\begin{lem}\label{L2.3}
Assume that $q$ is odd. Then
\begin{equation}\label{2.7}
h(t)=t^{(q-1)^2}+\sum_{\substack{\alpha,\beta\ge 0\cr \beta\le q-2\cr 0<\alpha+\beta\le q-1}}\Bigl[2^{\alpha+\beta}\binom{2(q-1)-\alpha-\beta}{q-1}-\binom{\alpha+\beta}\alpha\Bigr]t^{(q-1)^2-(\alpha+\beta q)}.
\end{equation}
\end{lem}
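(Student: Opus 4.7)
The plan is to compute each coefficient of $h(t)$ by passing to the reverse polynomial $\tilde h(u):= u^{(q-1)^2} h(1/u)$ and extracting $[u^M]\tilde h(u) = h_{(q-1)^2-M}$ for $0\le M\le (q-1)^2$.

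First I would derive an algebraically simpler expression for $\tilde h(u)=\frac{(1-2u)(1-u^{q^2-1})}{(1-u^{q-1})(1-u-u^q)}$. The identity $1-2u = (1-u-u^q) - u(1-u^{q-1})$ splits this rational function into two pieces, in each of which one denominator factor cancels:
\[
\tilde h(u) = \frac{1-u^{q^2-1}}{1-u^{q-1}} - \frac{u(1-u^{q^2-1})}{1-u-u^q} = \sum_{i=0}^{q} u^{(q-1)i} - \frac{u(1-u^{q^2-1})}{1-u-u^q}.
\]
Combined with the expansion $\frac{1}{1-u-u^q} = \sum_{a,b\ge 0}\binom{a+b}{a}u^{a+qb}$, and noting that $u^{q^2-1}$ in the numerator contributes only to degrees $\ge q^2 > (q-1)^2$, the computation of $\tilde h_M$ for $0\le M\le(q-1)^2$ reduces to
\[
\tilde h_M = [u^M]\sum_{i=0}^{q-1}u^{(q-1)i} \,-\, [u^{M-1}]\frac{1}{1-u-u^q},
\]
where the upper limit $q-1$ reflects that $(q-1)q>(q-1)^2$.

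Next I would evaluate each piece, writing $M=\alpha+\beta q$ with $0\le\alpha\le q-1$. For the first piece, the base-$q$ identity $(q-1)i=(q-i)+(i-1)q$ shows that $M=(q-1)i$ for some $i\in\{0,\dots,q-1\}$ precisely when $M=0$ or when $\alpha+\beta=q-1$ (which forces $\alpha\ge 1$). To bring this indicator into the closed form of the statement I would verify that in $\Bbb F_p$,
\[
2^{\alpha+\beta}\binom{2(q-1)-(\alpha+\beta)}{q-1}\equiv[\alpha+\beta=q-1]\qquad(0<\alpha+\beta\le q-1),
\]
using Fermat ($2^{q-1}\equiv 1$ since $p$ is odd) for $\alpha+\beta=q-1$ and base-$q$ Lucas to kill $\binom{2q-2-s}{q-1}$ when $0<s<q-1$. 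This produces the $2^{\alpha+\beta}\binom{\cdot}{q-1}$ summand.

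For the second piece, I would compute $[u^{M-1}]\frac{1}{1-u-u^q}\pmod p$ by writing $M-1=\gamma+\delta q$ in base $q$ (two subcases: $\gamma=\alpha-1,\,\delta=\beta$ when $\alpha\ge 1$; $\gamma=q-1,\,\delta=\beta-1$ when $\alpha=0,\,\beta\ge 1$), enumerating the solutions $(a,b)$ of $a+qb=M-1$, applying Lucas' theorem in base $q$ (valid since $q=p^e$), and collapsing the resulting sum with the hockey-stick identity. In both subcases the answer turns out to be $\binom{\alpha+\beta}{\alpha}$, producing the second term of the statement. The case $M=0$ gives $\tilde h_0=1$, matching the leading $t^{(q-1)^2}$. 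To justify the range restriction $\alpha+\beta\le q-1$ in the sum, I would observe that for pairs with $\alpha+\beta\ge q$ (and $M\le(q-1)^2$) Kummer's theorem forces $\binom{\alpha+\beta}{\alpha}\equiv 0\pmod p$, so those terms vanish automatically and are silently absorbed.

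The main obstacle will be the Lucas-based evaluation of $[u^{M-1}]\frac{1}{1-u-u^q}$: the two base-$q$ expansions of $M-1$ must be treated separately, and when $\gamma+\delta\ge q$ one has to restrict the hockey-stick telescoping to the range of $k$ where the Lucas product avoids a vanishing factor from carrying, so some care is needed to reach the unified answer $\binom{\alpha+\beta}{\alpha}$ in every case.
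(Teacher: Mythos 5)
Your proposal is correct, but it reaches \eqref{2.7} by a genuinely different route. The paper does not touch the rational expression \eqref{2.4} directly: it uses \eqref{2.5}, which identifies the coefficients of $h(t)$ with the already-known power sums $-\sum_{a\in\Bbb F_q}d_n(a)$ from [HL, Proposition 2.1 and Theorem 3.1], so its proof is essentially a reindexing ($n=q^2-1-i$, $i=\alpha+\beta q$) of formula \eqref{2.8} plus the same Kummer-type observation you make to discard the pairs with $\alpha+\beta\ge q$. You instead extract the coefficients of $h$ from scratch: the reversal $\tilde h(u)=u^{(q-1)^2}h(1/u)$, the exact splitting $1-2u=(1-u-u^q)-u(1-u^{q-1})$ (which I checked cancels one denominator factor in each piece), the Fibonacci-type expansion $1/(1-u-u^q)=\sum\binom{a+b}{a}u^{a+qb}$, base-$q$ Lucas, and the hockey-stick identity. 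I verified the delicate points: the indicator computation $2^{\alpha+\beta}\binom{2(q-1)-\alpha-\beta}{q-1}\equiv[\alpha+\beta=q-1]$ via $2^{q-1}\equiv 1$ and Lucas; the evaluation $[u^{M-1}](1-u-u^q)^{-1}\equiv\binom{\alpha+\beta}{\alpha}$ in both base-$q$ subcases of $M-1$, including the truncated telescoping when $\alpha+\beta\ge q+1$ (where the sum collapses to $\binom q\alpha\equiv 0$, consistent with $\binom{\alpha+\beta}\alpha\equiv0$); and a numerical check at $q=3$. The trade-off: the paper's proof is shorter because it leans on the machinery of [HL], while yours is self-contained and makes the combinatorial origin of the two terms in \eqref{2.7} transparent, at the cost of a more intricate digit/carry analysis. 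Both are valid proofs of the lemma.
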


\begin{proof}
Let $f_n(x)=\sum_{j\ge 0}\binom n{2j}x^j$. Let $n=\alpha+\beta q$, where $0\le \alpha,\beta\le q-1$. When $1\le n\le q^2-2$, we have
\[
\begin{split}
&\sum_{a\in\Bbb F_q}d_n(a)\cr
=\;&\Bigl(\frac 12\Bigr)^{n-1}\sum_{a\in\Bbb F_q}f_n(a)\kern 5.8cm\text{(by \cite[Proposition 2.1]{HL})}\cr
=\;&\Bigl(\frac 12\Bigr)^{n-1}\Bigl(-\frac 12\Bigr)\Bigl[\binom{\alpha+\beta}{q-1}-2^{\alpha+\beta}\binom{2(q-1)-\alpha-\beta}{q-1-\alpha}\Bigr]\kern 5mm
\text{(by \cite[Theorem 3.1]{HL})}\cr
=\;&-2^{-\alpha-\beta}\binom{\alpha+\beta}{q-1}+\binom{2(q-1)-\alpha-\beta}{q-1-\alpha}.
\end{split}
\]
When $n=q^2-1$, since $d_{q^2-1}(x)\equiv x^{q-1}+1\pmod{x^q-x}$, we have 
\[
\sum_{a\in\Bbb F_q}d_{q^2-1}(a)=-1.
\]
To sum up, we have
\begin{equation}\label{2.8}
\sum_{a\in\Bbb F_q}d_n(a)=
\begin{cases}
\displaystyle -2^{-\alpha-\beta}\binom{\alpha+\beta}{q-1}+\binom{2(q-1)-\alpha-\beta}{q-1-\alpha}&\text{if}\ 1\le n\le q^2-2, \vspace{3mm}\cr
-1&\text{if}\ n=q^2-1.
\end{cases}
\end{equation}
Therefore,
\[
\begin{split}
&h(t)\cr
=\;&-\sum_{n=2(q-1)}^{q^2-1}\Bigl(\sum_{a\in\Bbb F_q}d_n(a)\Bigr)t^{n-2(q-1)}\kern 2cm\text{(by \eqref{2.5})}\cr
=\;&-\sum_{i=0}^{(q-1)^2}\Bigl(\sum_{a\in\Bbb F_q}d_{q^2-1-i}(a)\Bigr)t^{(q-1)^2-i}\kern 1.7cm (n=q^2-1-i)\cr
=\;&t^{(q-1)^2}+\sum_{\substack{0\le \alpha,\beta\le q-1\cr 0<\alpha+\beta q\le(q-1)^2}}\Bigl[2^{\alpha+\beta}\binom{2(q-1)-\alpha-\beta}{q-1}-\binom{\alpha+\beta}\alpha\Bigr]t^{(q-1)^2-(\alpha+\beta q)}\cr
&\kern 7cm \text{($i=\alpha+\beta q$ and by \eqref{2.8})}.
\end{split}
\]
In the above sum, if $\alpha+\beta\ge q$, then $\binom{2(q-1)-\alpha-\beta}{q-1}=0$ and $\binom{\alpha+\beta}\alpha=0$. (The second equation holds since the sum $\alpha+\beta$ has carry in base $p$.) Therefore \eqref{2.7} follows.
\end{proof}

In \eqref{2.3} substitute $t$ by $xt$. We then have
\[
\begin{split}
&\sum_{n=1}^{q^2-1}d_nx^nt^n\cr
\equiv\;&\frac{xt[(xt)^{q^2-1}-1]}{xt-1}+h(xt)\sum_{k=1}^{q-1}(xt-1)^{q-1-k}(xt)^{2k}x^k\pmod{x^q-x}\cr
\equiv\;&\frac{xt(1-t^{q^2-1})}{1-xt}+h(xt)\sum_{k=1}^{q-1}(xt-1)^{q-1-k}t^{2k}x^{3k}\pmod{x^q-x}. 
\end{split}
\]
Since 
\[
\begin{split}
\frac 1{1-xt}\;&=\sum_{k\ge 0}x^kt^k\cr
&=1+\sum_{k=1}^{q-1}\sum_{l\ge 0}x^{k+l(q-1)}t^{k+l(q-1)}\cr
&\equiv 1+\sum_{k=1}^{q-1}x^kt^k\sum_{l\ge 0}t^{l(q-1)}\pmod{x^q-x}\cr
&=1+\frac 1{1-t^{q-1}}\sum_{k=1}^{q-1}x^kt^k,
\end{split}
\]
we have
\begin{equation}\label{2.9}
\begin{split}
&\sum_{n=1}^{q^2-1}d_nx^nt^n\cr
\equiv\;&xt(1-t^{q^2-1})\Bigl[1+\frac 1{1-t^{q-1}}\sum_{k=1}^{q-1}x^kt^k\Bigr]+h(xt)\sum_{k=1}^{q-1}(xt-1)^{q-1-k}t^{2k}x^{3k}\pmod{x^q-x}\cr
=\;&(t-t^{q^2})x+\frac{t^{q^2-1}-1}{t^{q-1}-1}\sum_{k=1}^{q-1}t^{k+1}x^{k+1}+h(xt)\sum_{k=1}^{q-1}(xt-1)^{q-1-k}t^{2k}x^{3k}.
\end{split}
\end{equation}
From here on, the even $q$ case and the odd $q$ case have to be considered separately.

\section{The Even $q$ Case}

Assume that $q$ is even. By Lemma~\ref{L2.2},
\begin{equation}\label{3.1}
\begin{split}
h(xt)\;&=\sum_{\substack{\alpha,\beta\ge 0\cr \alpha+\beta\le q-2}}\binom{\alpha+\beta}\alpha t^{(q-1)^2-(\alpha+\beta q)}x^{(q-1)^2-(\alpha+\beta q)}\cr
&\equiv\sum_{\substack{\alpha,\beta\ge 0\cr \alpha+\beta\le q-2}}\binom{\alpha+\beta}\alpha t^{(q-1)^2-(\alpha+\beta q)}x^{q-1-(\alpha+\beta)}\pmod{x^q-x}.
\end{split}
\end{equation}
By \eqref{2.9} and \eqref{3.1},
\[
\begin{split}
&\sum_{n=1}^{q^2-1}d_nx^nt^n\cr
\equiv\;&(t-t^{q^2})x+\frac{t^{q^2-1}-1}{t^{q-1}-1}\sum_{k=1}^{q-1}t^{k+1}x^{k+1}\cr
&+\sum_{\substack{\alpha,\beta\ge 0\cr \alpha+\beta\le q-2}}\sum_{k=1}^{q-1}\binom{\alpha+\beta}\alpha(xt-1)^{q-1-k}t^{(q-1)^2+2k-(\alpha+\beta q)}x^{q-1+3k-(\alpha+\beta)}\cr
&\kern 8cm\pmod{x^q-x}\cr
=\;&(t-t^{q^2})x+\frac{t^{q^2-1}-1}{t^{q-1}-1}\sum_{k=1}^{q-1}t^{k+1}x^{k+1}\cr
&+\sum_{\substack{\alpha,\beta\ge 0\cr \alpha+\beta\le q-2}}\sum_{k=1}^{q-1}\sum_j\binom{\alpha+\beta}\alpha\binom{q-1-k}j(xt)^{q-1-k-j}t^{(q-1)^2+2k-(\alpha+\beta q)}x^{q-1+3k-(\alpha+\beta)}\cr
=\;&(t-t^{q^2})x+\frac{t^{q^2-1}-1}{t^{q-1}-1}\sum_{k=1}^{q-1}t^{k+1}x^{k+1}\cr
&+\sum_{\substack{\alpha,\beta\ge 0\cr \alpha+\beta\le q-2}}\sum_{k=1}^{q-1}\sum_j\binom{\alpha+\beta}\alpha\binom{q-1-k}j 
t^{q(q-1)+k-j-(\alpha+\beta q)}x^{2(q-1)+2k-j-(\alpha+\beta)}.
\end{split}
\]
Let $x$ vary over $\Bbb F_q$ and sum both sides of the above equation. We get
\[
\begin{split}
&\sum_{n=1}^{q^2-1}\Bigl(\sum_{a\in\Bbb F_q}a^nd_n(a)\Bigr)t^n\cr
=\;&\frac{t^{q^2-1}-1}{t^{q-1}-1}t^{q-1}+\sum_{\substack{\alpha,\beta,j\ge 0; k\ge 1\cr \alpha+\beta\le q-2; j+k\le q-1\cr
2k-j-(\alpha+\beta)\equiv 0 \kern-2mm \pmod{q-1}}}\binom{\alpha+\beta}\alpha\binom{q-1-k}j t^{q(q-1)+k-j-(\alpha+\beta q)}\cr
=\;&\sum_{l=1}^{q+1}t^{l(q-1)}+\sum_{\substack{\alpha,\beta,j\ge 0; k\ge 1\cr \alpha+\beta\le q-2; j+k\le q-1\cr
2k-j-(\alpha+\beta)\equiv 0 \kern-2mm \pmod{q-1}}}\binom{\alpha+\beta}\alpha\binom{q-1-k}j t^{q(q-1)+k-j-(\alpha+\beta q)}.
\end{split}
\]
Therefore, for $1\le n\le q^2-1$,
\begin{equation}\label{3.2}
\sum_{a\in\Bbb F_q}a^nd_n(a)=\delta_n+\sum_{\substack{\alpha,\beta,j\ge 0;k\ge 1\cr \alpha+\beta\le q-2;j+k\le q-1\cr 2k-j-(\alpha+\beta)\equiv 0 \kern-2mm \pmod{q-1}\cr q(q-1)+k-j-(\alpha+\beta q)=n}}\binom{\alpha+\beta}\alpha\binom{q-1-k}j,
\end{equation}
where
\begin{equation}\label{3.2a}
\delta_n=
\begin{cases}
1&\text{if}\ n\equiv 0\pmod {q-1},\cr
0&\text{if}\ n\not\equiv 0\pmod {q-1}.
\end{cases}
\end{equation}

\begin{prop}\label{P3.1}
Let $q$ be a power of $2$. Let $1\le n\le q^2-1$ and write $n=u+vq$, $0\le u,v\le q-1$. Then in $\Bbb F_2$,
\begin{equation}\label{3.4}
\begin{split}
&\sum_{a\in\Bbb F_q}a^nd_n(a)\cr
=\;&\delta_n+\sum_{-1\le s\le 2}\;\sum_{\max\{s-\frac{u+v}{q-1},v-q+1\}\le \epsilon<\min\{s-\frac{u+v}{q-1}+1,v\}}
\binom{u+v-(s-\epsilon)(q-1)}{(s-2\epsilon+1)(q-1)-2u-v-\epsilon}.
\end{split}
\end{equation}
\end{prop}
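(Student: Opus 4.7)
The plan is to start from equation \eqref{3.2} and systematically resolve the congruence and equality constraints by introducing two integer parameters. Specifically, I would set $\beta = q-1-v+\epsilon$ and $s(q-1) = 2k-j-(\alpha+\beta)$, so that the bounds $0\le\beta\le q-2$ translate into $v-q+1\le \epsilon\le v-1$ and the mod-$(q-1)$ condition becomes an explicit equality parametrized by $s$. Combining these with the defining equation $q(q-1) + k - j - (\alpha+\beta q) = u + vq$ and eliminating $\beta$ and the congruence, one obtains
\[
k = (s-\epsilon+1)(q-1) - (u+v), \qquad j+\alpha = (s-2\epsilon+1)(q-1) - 2u - v - \epsilon,
\]
both of which depend only on $(s,\epsilon)$. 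In particular $T := j+\alpha$, which will appear as the lower index of the binomial in \eqref{3.4}, is constant as $j$ varies within a fixed $(s,\epsilon)$-cell.

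Second, I would pin down the ranges. Requiring $1\le k\le q-1$ gives the pair of inequalities $s - (u+v)/(q-1) \le \epsilon < s - (u+v)/(q-1)+1$, which together with the $\beta$-window $v-q+1\le \epsilon<v$ yields exactly the range for $\epsilon$ quoted in \eqref{3.4}. The bound $-1\le s\le 2$ is then forced by observing that $2k - j - (\alpha+\beta)$ lies in $[5-2q,\,2q-2]$ (using $k\in[1,q-1]$, $j\in[0,q-1]$, $\alpha+\beta\in[0,q-2]$), so that the only admissible integer multiples $s(q-1)$ in that interval correspond to $s\in\{-1,0,1,2\}$.

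Third, and this is the main work, I would evaluate the inner sum $\sum_j \binom{T-j+\beta}{T-j}\binom{q-1-k}{j}$ for each fixed $(s,\epsilon)$ in characteristic $2$. The natural route is first to apply Lemma~\ref{L2.1} to $\binom{q-1-k}{j}$ to rewrite it as $\binom{j+k}{j}$ (valid for $0\le j\le q-1-k$), converting the sum into $\sum_j \binom{T-j+\beta}{\beta}\binom{j+k}{k}$. A Chu--Vandermonde convolution then collapses this to a single binomial coefficient, and a second pass through Lemma~\ref{L2.1} presents that binomial in the form $\binom{u+v-(s-\epsilon)(q-1)}{(s-2\epsilon+1)(q-1)-2u-v-\epsilon}$ required by \eqref{3.4}.

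The hard part will be controlling the side conditions $\alpha+\beta\le q-2$ and $j+k\le q-1$ across the Chu--Vandermonde step. The closed-form identity is valid on the unrestricted sum $j\ge 0$, but \eqref{3.2} admits only $j\in[\max(0,T+\beta-q+2),\,\min(q-1-k,T)]$, so I must check that the out-of-range terms either vanish (because $\binom{q-1-k}{j}=0$ for $j>q-1-k$) or drop out in characteristic $2$, and that the range hypotheses $0\le\alpha,v\le q-1$ of Lemma~\ref{L2.1} are satisfied by the quantities being substituted at each application. Once these bookkeeping issues are resolved, the identification of the collapsed binomial with the expression on the right-hand side of \eqref{3.4} is immediate.
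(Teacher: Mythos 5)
Your first two steps---introducing $\epsilon$ via $\beta=q-1-v+\epsilon$ and $s$ via $2k-j-(\alpha+\beta)=s(q-1)$, solving for $k$ and $j+\alpha$ in terms of $(s,\epsilon)$, and deriving the ranges $-1\le s\le 2$, $v-q+1\le\epsilon<v$, $s-\frac{u+v}{q-1}\le\epsilon<s-\frac{u+v}{q-1}+1$---coincide exactly with the paper's proof, and that part is sound. The gap is in the third step, the evaluation of the inner sum for fixed $(s,\epsilon)$, which you defer as ``bookkeeping.'' It is not bookkeeping; it is the crux, and the route you sketch does not close. After you rewrite $\binom{q-1-k}{j}$ as $\binom{j+k}{j}$, the sum $\sum_j\binom{T-j+\beta}{\beta}\binom{j+k}{k}$ is truncated on \emph{both} ends: at the top by $j\le q-1-k$ (where $\binom{q-1-k}{j}$ vanishes but its replacement $\binom{j+k}{j}$ does not), and at the bottom by $j\ge T-(q-2-\beta)$ (equivalently $\alpha\le q-2-\beta$, where $\binom{\alpha+\beta}{\alpha}$ does not vanish --- e.g.\ $\binom{q-1}{\alpha}=1$ in $\Bbb F_2$ for all $\alpha$). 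Neither tail is empty or zero in general, so the unrestricted identity $\sum_{j=0}^{T}\binom{\beta+T-j}{\beta}\binom{k+j}{k}=\binom{\beta+k+T+1}{\beta+k+1}$ does not apply to your sum, and the discrepancy is not a few vanishing terms but essentially the whole answer. Indeed the unrestricted convolution has upper index $T+\beta+k+1=T+2(q-1)-(v-\epsilon)-(q-1-k)+1$, and a single application of Lemma~\ref{L2.1} with lower index $T$ converts this to $\binom{(v-\epsilon)+(q-1-k)-q}{T}$, which never equals the target $\binom{q-1-k}{T}=\binom{u+v-(s-\epsilon)(q-1)}{T}$ (one would need $v-\epsilon=q$, impossible since $\epsilon\ge v-q+1$). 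So the closed form you expect to land on is not the one in \eqref{3.4}; the two-sided corrections must supply the difference, and you have not computed them.

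The missing idea is the choice of \emph{which} binomial to transform with Lemma~\ref{L2.1}. The paper applies it to $\binom{\alpha+\beta}{\alpha}=\binom{q-1+\alpha-(v-\epsilon)}{\alpha}$, converting it to $\binom{v-\epsilon}{\alpha}$; this factor vanishes for $\alpha>v-\epsilon$, while the other factor $\binom{q-1-k}{T-\alpha}$ already vanishes outside $0\le T-\alpha\le q-1-k$. Hence the summation range $0\le\alpha\le v-1-\epsilon$ differs from the full Vandermonde range by exactly one term, $\alpha=v-\epsilon$. Adding that single term, applying the ordinary Vandermonde convolution to get $\binom{u+v-(s-\epsilon)(q-1)+v-\epsilon}{T}$, and recombining it with the extracted term yields \eqref{3.4}. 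Your dual choice (transforming $\binom{q-1-k}{j}$ instead) forfeits this one-term alignment and replaces it with a doubly truncated convolution that the parallel-summation identity cannot handle. To repair the proof, switch the application of Lemma~\ref{L2.1} to the $\binom{\alpha+\beta}{\alpha}$ factor and sum over $\alpha$ rather than $j$.
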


\begin{proof}
First note that in the sum on the right side of \eqref{3.2}, the condition $q(q-1)+k-j-(\alpha+\beta q)=n$ is equivalent to $k-j-\alpha-u+(q-1-\beta-v)q$, which is further equivalent to
\[
\begin{cases}
k-j-\alpha-u=\epsilon q,\cr
q-1-\beta-v+\epsilon=0
\end{cases}
\] 
for some $\epsilon\in \Bbb Z$. Therefore, the conditions on $\alpha,\beta,j,k$ in that sum can be {\em replaced} by
\begin{equation}\label{3.7}
\begin{cases}
\beta\ge 0,\cr
1\le k\le q-1,\cr
0\le \alpha\le q-2-\beta,\cr
2k-j-(\alpha+\beta)=s(q-1),\quad -1\le s\le 2,\cr
k-j-\alpha-u=\epsilon q,\quad \epsilon\in\Bbb Z,\cr
q-1-\beta-v+\epsilon=0.
\end{cases}
\end{equation}
(Note. We remind the reader that \eqref{3.7} is not equivalent to but weaker than the conditions in the sum in \eqref{3.2}; 
the restriction on $j$ is not present in \eqref{3.7}.
However, the relaxation only brings additional zero terms to the sum in \eqref{3.2}.) We solve \eqref{3.7} for $\beta,k,j$ in terms of $\alpha,s,\epsilon$. The result is
\begin{equation}\label{3.8}
\begin{cases}
\beta=q-1+\epsilon-v,\cr
k=(s-\epsilon+1)(q-1)-u-v,\cr
j=(s-2\epsilon+1)(q-1)-2u-v-\alpha-\epsilon,\cr
-1\le s\le 2, \vspace{1mm}\cr
\displaystyle \frac{u+v}{q-1}-1<s-\epsilon\le\frac{u+v}{q-1},\vspace{1mm}\cr
\epsilon\ge v-q+1,\cr
0\le \alpha\le v-1-\epsilon.
\end{cases}
\end{equation} 
Now \eqref{3.2} can be written as
\begin{equation}\label{3.9}
\begin{split}
&\delta_n+\sum_{a\in\Bbb F-q}a^nd_n(a)\cr
=\;&\sum_{\substack{s,\epsilon\cr -1\le s\le 2\cr \frac{u+v}{q-1}-1<s-\epsilon\le\frac{u+v}{q-1},\cr \epsilon\ge v-q+1}}\sum_{0\le \alpha\le v-1-\epsilon}
\binom{\alpha+q-1+\epsilon-v}\alpha\binom{u+v-(s-\epsilon)(q-1)}{(s-2\epsilon+1)(q-1)-2u-v-\alpha-\epsilon}\cr
=\;&\sum_{\substack{s,\epsilon\cr -1\le s\le 2\cr \frac{u+v}{q-1}-1<s-\epsilon\le\frac{u+v}{q-1},\cr v-q+1\le\epsilon\le v-1}}\sum_{0\le \alpha\le v-1-\epsilon}
\binom{q-1+\alpha-(v-\epsilon)}\alpha\binom{u+v-(s-\epsilon)(q-1)}{(s-2\epsilon+1)(q-1)-2u-v-\epsilon-\alpha}\cr
=\;&\sum_{\substack{s,\epsilon\cr -1\le s\le 2\cr \frac{u+v}{q-1}-1<s-\epsilon\le\frac{u+v}{q-1},\cr v-q+1\le\epsilon\le v-1}}\sum_{0\le \alpha\le v-1-\epsilon}
\binom{v-\epsilon}\alpha\binom{u+v-(s-\epsilon)(q-1)}{(s-2\epsilon+1)(q-1)-2u-v-\epsilon-\alpha}\cr
&\kern 9.5cm\text{(by Lemma~\ref{L2.1})} \cr
=\;&\sum_{\substack{s,\epsilon\cr -1\le s\le 2\cr \frac{u+v}{q-1}-1<s-\epsilon\le\frac{u+v}{q-1},\cr v-q+1\le\epsilon\le v-1}}\left[
\binom{u+v-(s-\epsilon)(q-1)}{(s-2\epsilon+1)(q-1)-2u-2v}\right.\cr
&\kern2.8cm +\sum_{0\le \alpha\le v-\epsilon} \left.
\binom{v-\epsilon}\alpha\binom{u+v-(s-\epsilon)(q-1)}{(s-2\epsilon+1)(q-1)-2u-v-\epsilon-\alpha}\right]\cr
=\;&\sum_{\substack{s,\epsilon\cr -1\le s\le 2\cr \frac{u+v}{q-1}-1<s-\epsilon\le\frac{u+v}{q-1},\cr v-q+1\le\epsilon\le v-1}}
\left[\binom{u+v-(s-\epsilon)(q-1)}{(s-2\epsilon+1)(q-1)-2u-2v}+\binom{u+v-(s-\epsilon)(q-1)+v-\epsilon}{(s-2\epsilon+1)(q-1)-2u-2v+v-\epsilon}\right]\cr
\end{split}
\end{equation}
\[
\begin{split}
=\;&\sum_{\substack{s,\epsilon\cr -1\le s\le 2\cr \frac{u+v}{q-1}-1<s-\epsilon\le\frac{u+v}{q-1},\cr v-q+1\le\epsilon\le v-1}}
\binom{u+v-(s-\epsilon)(q-1)}{(s-2\epsilon+1)(q-1)-2u-2v+v-\epsilon}\cr
=\;&\sum_{-1\le s\le 2}\;\sum_{\max\{s-\frac{u+v}{q-1},v-q+1\}\le \epsilon<\min\{s-\frac{u+v}{q-1}+1,v\}}
\binom{u+v-(s-\epsilon)(q-1)}{(s-2\epsilon+1)(q-1)-2u-v-\epsilon}.
\end{split}
\]
\end{proof}

\noindent{\bf Note.} In \eqref{3.4}, for each $-1\le s\le 2$, there is at most one $\epsilon$ in the specified range. Thus the sum in \eqref{3.4} contains at most four terms. 
More precisely, \eqref{3.4} can be stated as follows.

When $0<u+v<q-1$, 
\begin{equation}\label{3.10a}
\begin{split}
\sum_{a\in\Bbb F_q}a^nd_n(a)\;&=\sum_{-1\le s\le\min\{2,v-1\}}\binom{u+v}{(-s+1)(q-1)-2u-v-s}\cr
&=\sum_{-1\le s\le\min\{0,v-1\}}\binom{u+v}{(-s+1)(q-1)-2u-v-s}\cr
&=
\begin{cases}
\displaystyle \binom u{2(q-1)-2u+1}&\text{if}\ v=0, \vspace{3mm}\cr
\displaystyle \binom{u+v}{2(q-1)-2u-v+1}+\binom{u+v}{q-1-2u-v}&\text{if}\ v>0.
\end{cases}
\end{split}
\end{equation}

When $q-1\le u+v<2(q-1)$,
\begin{equation}\label{3.11a}
\begin{split}
\sum_{a\in\Bbb F_q}a^nd_n(a)\;&=\delta_n+\sum_{\max\{-1,v-q+2\}\le s\le\min\{2,v\}}\binom{u+v-(q-1)}{(-s+3)(q-1)-2u-v-s+1}\cr
&=\delta_n+\sum_{\max\{0,v-q+2\}\le s\le\min\{1,v\}}\binom{u+v-(q-1)}{(-s+3)(q-1)-2u-v-s+1}\cr
&=\begin{cases}
1&\text{if}\ v=0, \vspace{2mm}\cr
\displaystyle \delta_n\!+\!\binom{u+v-(q-1)}{3(q-1)\!-\!2u\!-\!v\!+\!1}\!+\!\binom{u+v-(q-1)}{2(q-1)-2u-v}&\text{if}\ 1\le v\le q-2, \vspace{2mm}\cr
\displaystyle \delta_n+\binom u{q-1-2u}&\text{if}\ v=q-1.
\end{cases}
\end{split}
\end{equation}

\begin{thm}\label{T3.2}
Let $q$ be a power of $2$ and let $1\le n\le q^2-1$ Write $n=u+vq$, $0\le u,v\le q-1$, and write $3n\equiv u'+v'q\pmod{q^2-1}$, $0\le u',v'\le q-1$. Then
\begin{equation}\label{}
\begin{split}
&\sum_{a\in\Bbb F_q}d_n(a)^3\cr
=\;&\delta_n+\sum_{-1\le s\le 2}\;\sum_{\max\{s-\frac{u+v}{q-1},v-q+1\}\le \epsilon<\min\{s-\frac{u+v}{q-1}+1,v\}}
\binom{u+v-(s-\epsilon)(q-1)}{(s-2\epsilon+1)(q-1)-2u-v-\epsilon}\cr 
&+
\begin{cases}
\displaystyle \binom{2(q-1)-u'-v'}{q-1-u'}&\text{if}\ u'+v'\ge q, \vspace{2mm}\cr
0&\text{if}\ u'+v'<q.
\end{cases}
\end{split}
\end{equation}
\end{thm}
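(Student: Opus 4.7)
The plan is to apply equation \eqref{2.2}, which in characteristic $2$ collapses to
\[
\sum_{a\in\Bbb F_q}d_n(a)^3=\sum_{a\in\Bbb F_q}d_{3n}(a)+\sum_{a\in\Bbb F_q}a^nd_n(a),
\]
since $3=1$ in $\Bbb F_2$. The second sum is furnished verbatim by Proposition~\ref{P3.1}, so the entire task reduces to evaluating $\sum_a d_{3n}(a)$ and recognizing it as the case-split binomial written in the theorem.

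The first move is a reduction of the index modulo $q^2-1$. The functional equation $d_k(x(1-x))=x^k+(1-x)^k$, applied to a root $x\in\Bbb F_{q^2}\setminus\{0,1\}$ of $x(1-x)=a$, shows that $d_{k+(q^2-1)}(a)=d_k(a)$ for every $a\in\Bbb F_q^\ast$, and since $d_k(0)=1$ for all $k\ge 1$, the same identity holds at $a=0$. Letting $m$ denote the unique integer in $\{1,\ldots,q^2-1\}$ with $m\equiv 3n\pmod{q^2-1}$, we obtain $\sum_a d_{3n}(a)=\sum_a d_m(a)$, and $m=u'+v'q$ furnishes the $u',v'$ of the theorem (with the convention $u'=v'=q-1$ in the case $q^2-1\mid 3n$).

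Next, I would extract $\sum_a d_m(a)$ from \eqref{2.5} and Lemma~\ref{L2.2}. Equation \eqref{2.5} identifies this sum with minus the coefficient of $t^{m-2(q-1)}$ in $h(t)$ when $2(q-1)\le m\le q^2-1$, and forces $\sum_a d_m(a)=0$ when $m<2(q-1)$; in characteristic $2$ the sign is immaterial. Substituting Lemma~\ref{L2.2}, this coefficient equals $\binom{\alpha+\beta}{\alpha}$ if there exist $\alpha,\beta\ge 0$ with $\alpha+\beta\le q-2$ and $\alpha+\beta q=q^2-1-m$, and $0$ otherwise. The base-$q$ expansion $q^2-1-m=(q-1-u')+(q-1-v')q$, valid when $u'\ge 1$, yields $\alpha=q-1-u'$, $\beta=q-1-v'$, and $\alpha+\beta=2(q-1)-u'-v'$; the constraint $\alpha+\beta\le q-2$ is then equivalent to $u'+v'\ge q$. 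The case $u'=0$ is automatically excluded by this inequality (since $v'\le q-1$), and the boundary $m=q^2-1$ reproduces $\binom{0}{0}=1$ correctly. Thus $\sum_a d_m(a)=\binom{2(q-1)-u'-v'}{q-1-u'}$ if $u'+v'\ge q$, and $0$ otherwise; combined with Proposition~\ref{P3.1}, this is the stated formula.

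The main obstacle is conceptual rather than computational: reconciling the generating function identity \eqref{2.5}, which is indexed by $n$ in a bounded range, with the potentially much larger exponent $3n$. The reduction modulo $q^2-1$ described above dispatches this, and the only real subtlety is the choice of representative $m\in\{1,\ldots,q^2-1\}$ when $q^2-1\mid 3n$; this already matters for $q=2$, $n=3$, where the correction term must equal $1$, not $0$. Once the reduction is secured, the remaining argument is coefficient bookkeeping against Lemma~\ref{L2.2}.
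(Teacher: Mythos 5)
Your proposal is correct and follows the paper's proof in its essential structure: both reduce the claim via \eqref{2.2} (with $3=1$ in characteristic $2$) to Proposition~\ref{P3.1} plus an evaluation of $\sum_{a\in\Bbb F_q}d_{3n}(a)$. The only divergence is in that last ingredient: the paper simply cites Theorem~4.2 of \cite{HL} for the case-split binomial $\binom{2(q-1)-u'-v'}{q-1-u'}$, whereas you rederive it from the paper's own \eqref{2.5} and Lemma~\ref{L2.2} by reducing the index $3n$ modulo $q^2-1$ (using the periodicity $d_{k+(q^2-1)}=d_k$ on $\Bbb F_q$ for $k\ge 1$) and then reading off the coefficient of $t^{m-2(q-1)}$ in $h(t)$ via the base-$q$ digits of $q^2-1-m$. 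Your coefficient extraction is correct, including the observation that $u'+v'\ge q$ is exactly the condition $\alpha+\beta\le q-2$, and your insistence on the representative $m=q^2-1$ (hence $u'=v'=q-1$, giving correction $1$) when $q^2-1\mid 3n$ resolves an ambiguity that the theorem statement itself leaves open; this buys a self-contained argument at the cost of a page of bookkeeping that the paper outsources to \cite{HL}.
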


\begin{proof}
By \eqref{2.2},
\[
\sum_{a\in\Bbb F_q}d_n(a)^3=\sum_{a\in\Bbb F_q}a^nd_n(a)+\sum_{a\in\Bbb F_q}d_{3n}(a).
\]
In the above, $\sum_{a\in\Bbb F_q}a^nd_n(a)$ is given by \eqref{3.4}. By \cite[Theorem 4.2]{HL},
\begin{equation}\label{3.11}
\sum_{a\in\Bbb F_q}d_{3n}(a)=\begin{cases}
\displaystyle \binom{2(q-1)-u'-v'}{q-1-u'}&\text{if}\ u'+v'\ge q, \vspace{2mm}\cr
0&\text{if}\ u'+v'<q.
\end{cases}
\end{equation}
\end{proof}

\begin{cor}\label{C3.3}
In Theorem~\ref{T3.2} assume $q>4$ and $(q,n)$ is desirable. Then $0<u+v<q-1$ or $q-1<u+v<2(q-1)$.

\begin{itemize}
\item[(i)]
If $0<u+v<q-1$, then in $\Bbb F_2$,
\begin{equation}\label{3.12}
\begin{split}
&\sum_{-1\le s\le\min\{0,v-1\}}\binom{u+v}{(-s+1)(q-1)-2u-v-s}\cr
=\;&\begin{cases}
\displaystyle \binom{2(q-1)-u'-v'}{q-1-u'}&\text{if}\ u'+v'\ge q,\vspace{2mm}\cr
0&\text{if}\ u'+v'<q.
\end{cases}
\end{split}
\end{equation}

\item[(ii)]
If $q-1<u+v<2(q-1)$, then in $\Bbb F_2$,
\begin{equation}\label{3.13}
\begin{split}
&\sum_{\max\{0,v-q+2\}\le s\le\min\{1,v\}}\binom{u+v-(q-1)}{(-s+3)(q-1)-2u-v-s+1}\cr
=\;&\begin{cases}
\displaystyle \binom{2(q-1)-u'-v'}{q-1-u'}&\text{if}\ u'+v'\ge q, \vspace{2mm}\cr
0&\text{if}\ u'+v'<q.
\end{cases}
\end{split}
\end{equation}
\end{itemize}
\end{cor}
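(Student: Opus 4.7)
The plan is to exploit the necessary condition $\sum_{a\in\Bbb F_q}d_n(a)^3=0$, which holds because $q>4$ is a power of $2$, so $q\ge 8$ and hence $3\le q-2$. By \eqref{2.2} the vanishing of this sum is equivalent to
\[
\sum_{a\in\Bbb F_q}a^n d_n(a)+\sum_{a\in\Bbb F_q}d_{3n}(a)=0.
\]
Once the three boundary values $u+v\in\{0,q-1,2(q-1)\}$ have been ruled out, the remaining values of $u+v$ split into the two open ranges appearing in the corollary, and the piecewise formulas \eqref{3.10a}, \eqref{3.11a} for $\sum_a a^n d_n(a)$, combined with \eqref{3.11} for $\sum_a d_{3n}(a)$, yield \eqref{3.12} and \eqref{3.13} by direct substitution (note that $\delta_n=0$ inside each open range, since $u+v\not\equiv 0\pmod{q-1}$ there).

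Two of the three boundary cases are immediate. The case $u+v=0$ would force $n=0$, outside the range $n\ge 1$. For $u+v=2(q-1)$ we have $u=v=q-1$ and $n=q^2-1$; but $d_{q^2-1}(x)\equiv x^{q-1}+1\pmod{x^q-x}$ sends $0\mapsto 1$ and every nonzero element of $\Bbb F_q$ to $0$, so it fails to be a permutation of $\Bbb F_q$ whenever $q\ge 3$.

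The main obstacle is the case $u+v=q-1$, which I would handle by a direct appeal to the defining functional equation. Here $n$ is a positive multiple of $q-1$, say $n=k(q-1)$, and
\[
d_n(x(1-x))=x^n+(1-x)^n=(x^{q-1})^k+((1-x)^{q-1})^k.
\]
For every $x\in\Bbb F_q$ each factor $x^{q-1}$ takes the value $0$ or $1$, so in characteristic $2$ the right-hand side equals $1$ when $x\in\{0,1\}$ and vanishes for every other $x\in\Bbb F_q$. Consequently $d_n(y)=0$ for every $y$ in the image of the map $x\mapsto x(1-x)=x+x^2$ restricted to $\Bbb F_q\setminus\{0,1\}$. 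Since $x\mapsto x+x^2$ is $\Bbb F_2$-linear with kernel $\{0,1\}$, this restricted image has cardinality $q/2-1$, which is at least $3$ as soon as $q\ge 8$. Hence $d_n$ attains $0$ at least three times and cannot be a permutation of $\Bbb F_q$, contradicting the hypothesis that $(q,n)$ is desirable.
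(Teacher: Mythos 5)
Your proof is correct, and its computational core --- imposing $\sum_{a\in\Bbb F_q}a^nd_n(a)+\sum_{a\in\Bbb F_q}d_{3n}(a)=\sum_{a\in\Bbb F_q}d_n(a)^3=0$ via \eqref{2.2} and then substituting \eqref{3.10a}, \eqref{3.11a} and \eqref{3.11} --- is exactly what the paper does. Where you genuinely diverge is in ruling out $u+v\equiv 0\pmod{q-1}$: the paper simply cites \cite[Theorem 2.5]{HL}, which asserts that for even $q$ a desirable pair must satisfy $(n,q^2-1)=3<q-1$, whence $q-1\nmid n$; you instead give a self-contained argument, noting that $q-1\mid n$ forces $x^n+(1-x)^n=1+1=0$ for every $x\in\Bbb F_q\setminus\{0,1\}$, so $d_n$ vanishes at all $q/2-1\ge 2$ points of the form $x(1-x)$ with $x\ne 0,1$ and cannot be injective (the endpoints $u+v=0$ and $u+v=2(q-1)$ being trivial, as you observe). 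Your route is more elementary and removes an external dependency at the cost of a few lines; the cited theorem is a much stronger statement ($(n,q^2-1)=3$ pins $n$ down far more tightly than merely excluding multiples of $q-1$), but only the weaker exclusion is needed here, so nothing is lost. Both arguments are complete.
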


\begin{proof}
By \cite[Theorem 2.5]{HL}, $(n,q^2-1)=3<q-1$. Thus $u+v\not\equiv 0\pmod{q-1}$, namely, $0<u+v<q-1$ or $q-1<u+v<2(q-1)$. Since $\sum_{a\in\Bbb F_q}a^nd_n(a)+\sum_{a\in\Bbb F_q}d_{3n}(a)=\sum_{a\in\Bbb F_q}d_n(a)^3=0$, \eqref{3.10a} and \eqref{3.11} yield \eqref{3.12}; \eqref{3.11a} and \eqref{3.11} yield \eqref{3.13}.
\end{proof}


\section{The Odd $q$ Case}

Assume that $q$ is an odd prime power. The plan of this section is parallel to that of Section 3. Starting from the end of Section 2, we proceed to determine the sum $\sum_{a\in\Bbb F_q}a^nd_n(a)$.

By Lemma 2.3,
\begin{equation}\label{4.1}
\begin{split}
&h(xt)\cr
=\;&(xt)^{(q-1)^2}\cr
&+\sum_{\substack{\alpha,\beta\ge 0; \beta\le q-2\cr 0<\alpha+\beta\le q-1}}\Bigl[2^{\alpha+\beta}\binom{2(q-1)-\alpha-\beta}{q-1}-\binom{\alpha+\beta}\alpha\Bigr]t^{(q-1)^2-(\alpha+\beta q)}x^{(q-1)^2-(\alpha+\beta q)}\cr
\equiv\;& t^{(q-1)^2}x^{q-1}\cr
&+\sum_{\substack{\alpha,\beta\ge 0; \beta\le q-2\cr 0<\alpha+\beta\le q-1}}\Bigl[2^{\alpha+\beta}\binom{2(q-1)-\alpha-\beta}{q-1}-\binom{\alpha+\beta}\alpha\Bigr]t^{(q-1)^2-(\alpha+\beta q)}x^{2(q-1)-(\alpha+\beta)}\cr
&\kern 9cm\pmod{x^q-x}.
\end{split}
\end{equation}
By \eqref{2.9} and \eqref{4.1},
\[
\begin{split}
&\sum_{n=1}^{q^2-1}d_nx^nt^n\cr
\equiv\;&(t-t^{q^2})x+\frac{t^{q^2-1}-1}{t^{q-1}-1}\sum_{k=1}^{q-1}t^{k+1}x^{k+1}\cr
&+\left[t^{(q-1)^2}x^{q-1}\!+\!\sum_{\substack{\alpha,\beta\ge 0; \beta\le q-2\cr 0<\alpha+\beta\le q-1}}\left[2^{\alpha+\beta}\binom{2(q-1)\!-\!\alpha\!-\!\beta}{q-1}\!-\!\binom{\alpha+\beta}\alpha\right]t^{(q-1)^2-(\alpha+\beta q)}x^{2(q-1)-(\alpha+\beta)}\right]\cr
&\cdot \sum_{\substack{k\ge 1; j\ge 0\cr k+j\le q-1}}\binom{q-1-k}j(-1)^j(xt)^{q-1-k-j}t^{2k}x^{3k} \qquad \pmod{x^q-x}\cr
\equiv\;&(t-t^{q^2})x+\frac{t^{q^2-1}-1}{t^{q-1}-1}\sum_{k=1}^{q-1}t^{k+1}x^{k+1}\cr
&+\sum_{\substack{k\ge 1; j\ge 0\cr k+j\le q-1}}\binom{q-1-k}j(-1)^jt^{q(q-1)+k-j}x^{q-1+2k-j}\cr
&+\sum_{\substack{\alpha,\beta\ge 0; \beta\le q-2\cr 0<\alpha+\beta\le q-1}}\sum_{\substack{k\ge 1; j\ge 0\cr k+j\le q-1}}\left[2^{\alpha+\beta}\binom{2(q-1)-\alpha-\beta}{q-1}-\binom{\alpha+\beta}\alpha\right]\cr
&\cdot \binom{q-1-k}j(-1)^jt^{q(q-1)+k-j-(\alpha+\beta q)}x^{2(q-1)+2k-j-(\alpha+\beta)}\qquad \pmod{x^q-x}.
\end{split}
\]
Let $x$ vary over $\Bbb F_q$ and sum both sides of the above equation. We have 
\[
\begin{split}
&\sum_{n=1}^{q^2-1}\Bigl(\sum_{a\in\Bbb F_q}a^nd_n(a)\Bigr)t^n\cr
=\;&-\frac{t^{q^2-1}-1}{t^{q-1}-1}t^{q-1}-\sum_{\substack{k\ge1;j\ge 0\cr k+j\le q-1\cr 2k-j\equiv 0\kern -2mm\pmod{q-1}}}\binom{q-1-k}j(-1)^j t^{q(q-1)+k-j}\cr
&-\sum_{\substack{\alpha,\beta\ge 0; \beta\le q-2\cr 0<\alpha+\beta\le q-1}}\sum_{\substack{k\ge1;j\ge 0\cr k+j\le q-1\cr 2k-j\equiv \alpha+\beta \kern -2mm\pmod{q-1}}}
\left[2^{\alpha+\beta}\binom{2(q-1)-\alpha-\beta}{q-1}-\binom{\alpha+\beta}\alpha\right]\cr
&\cdot\binom{q-1-k}j(-1)^jt^{q(q-1)+k-j-(\alpha+\beta q)}.
\end{split}
\]
Thus, for $1\le n\le q^2-1$,
\[
\sum_{a\in\Bbb F_q}a^nd_n(a)=-\delta_n-\text{I}-\text{II}+\text{III},
\]
where $\delta_n$ is defined in \eqref{3.2a} and
\begin{equation}\label{4.2}
\text{I}=\sum_{\substack{k\ge 1;j\ge 0\cr k+j\le q-1\cr 2k-j\equiv 0\kern-2mm\pmod{q-1}\cr q(q-1)+k-j=n}}\binom{q-1-k}j(-1)^j,
\end{equation}
\begin{equation}\label{4.3}
\text{II}=\sum_{\substack{\alpha,\beta\ge 0;\beta\le q-2\cr 0<\alpha+\beta\le q-1}}\sum_{\substack{k\ge 1;j\ge 0\cr k+j\le q-1\cr 2k-j\equiv \alpha+\beta\kern-2mm\pmod{q-1}\cr q(q-1)+k-j-(\alpha+\beta q)=n}} 2^{\alpha+\beta}\binom{2(q-1)-\alpha-\beta}{q-1}\binom{q-1-k}j(-1)^j,
\end{equation}
\begin{equation}\label{4.4}
\text{III}=\sum_{\substack{\alpha,\beta\ge 0;\beta\le q-2\cr 0<\alpha+\beta\le q-1}}\sum_{\substack{k\ge 1;j\ge 0\cr k+j\le q-1\cr 2k-j\equiv \alpha+\beta\kern-2mm\pmod{q-1}\cr q(q-1)+k-j-(\alpha+\beta q)=n}} \binom{\alpha+\beta}\alpha\binom{q-1-k}j(-1)^j.
\end{equation}

We now determine the sums $\text{I}$, $\text{II}$ and $\text{III}$ separately. Write $n=u+vq$, $0\le u,v\le q-1$.

\begin{lem}\label{L4.1}
Assume $1\le n\le q^2-2$. We have
\begin{equation}\label{4.5}
\text{\rm I}=\sum_{\max\{-1,v-q+\frac{u+v}{q-1}\}<s\le v-q+\frac{u+v}{q-1}+1}\binom{u+v-(s-v+q-1)(q-1)}{(s-2v+2q)(q-1)-2(u+v)}.
\end{equation}
\end{lem}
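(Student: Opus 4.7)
I would follow the blueprint of Proposition~\ref{P3.1}: parametrize the pair $(k,j)$ by a single integer $s$ that encodes the congruence $2k-j\equiv 0\pmod{q-1}$, solve for $k$ and $j$ in closed form, handle the sign $(-1)^j$, and finally translate the remaining constraints into a range for $s$.

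Set $2k-j=s(q-1)$ with $s\in\Bbb Z$. The condition $q(q-1)+k-j=n=u+vq$ rewrites as $k-j=u-(q-1-v)q$, and together with $2k-j=s(q-1)$ this linear $2\times 2$ system has the unique solution
\[
k=(s-v+q)(q-1)-(u+v),\qquad j=(s-2v+2q)(q-1)-2(u+v).
\]
A direct rewriting gives $q-1-k=(u+v)-(s-v+q-1)(q-1)$, which matches the top entry of the binomial in \eqref{4.5}, while $j$ matches the bottom entry. Since $q$ is odd, $q-1$ is even, so both summands in the expression for $j$ are even; hence $(-1)^j=1$ and the sign factor in \eqref{4.2} disappears.

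It then remains to translate the constraints $k\ge 1$, $j\ge 0$, $k+j\le q-1$ into bounds for $s$. Put $\tau=v-q+(u+v)/(q-1)$. The inequality $k\le q-1$, needed to keep the binomial top non-negative, is equivalent to $s\le \tau+1$, and $k\ge 1$ is equivalent, for integer $s$, to $s>\tau$. Moreover, $k\ge 1$ combined with $k+j\le q-1$ forces $j\le q-2$, whence $2k-j\ge 4-q>-(q-1)$ and therefore $s(q-1)>-(q-1)$, i.e.\ $s>-1$. Combining the two lower bounds yields $s>\max\{-1,\tau\}$. The remaining constraints $j\ge 0$ and $k+j\le q-1$ do not need to be imposed explicitly: throughout the range $\max\{-1,\tau\}<s\le \tau+1$ the top $q-1-k$ is a non-negative integer, so any $s$ that violates $0\le j\le q-1-k$ produces a vanishing binomial and hence contributes nothing. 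Assembling these observations gives \eqref{4.5}.

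The most delicate step is pinning down the lower bound $s>-1$: it is not a consequence of $k\ge 1$ alone, but requires combining $k\ge 1$ with $j\le q-2$ (which is in turn drawn from $k+j\le q-1$). One must also justify that enlarging the index set from the original hard constraints to the range $\max\{-1,\tau\}<s\le \tau+1$ introduces only zero terms, which rests on $q-1-k\ge 0$ being maintained throughout that range.
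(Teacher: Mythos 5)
Your proposal is correct and follows essentially the same route as the paper: parametrize by $s$ via $2k-j=s(q-1)$, solve the linear system for $k,j$, convert $0<k\le q-1$ into the range for $s$, and observe that dropping the constraints on $j$ only introduces vanishing binomials. You are slightly more explicit than the paper on two points it leaves tacit — that $j$ is even (so $(-1)^j=1$) and how the lower bound $s>-1$ is extracted from $k\ge1$ and $k+j\le q-1$ — but the argument is the same.
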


\begin{proof} 
In the sum in \eqref{4.2}, the conditions on $k$ and $j$ can be {\em replaced} by 
\begin{equation}\label{4.5a}
\begin{cases}
0<k\le q-1,\cr
2k-j=s(q-1),\quad 0\le s\le 2,\cr
q(q-1)+k-j=u+vq.
\end{cases}
\end{equation}
(Note. In \eqref{4.5a} we dropped the restriction on $j$. However, this relaxation has no effect on the sum in \eqref{4.2} since only additional zero terms are brought in.) Solving \eqref{4.5a} for $k$ and $j$ in terms of $s$, we get
\[
\begin{cases}
k=(s-v+q)(q-1)-u-v,\cr
j=(s-2v+2q)(q-1)-2(u+v),\vspace{1mm}\cr
\displaystyle \max\{-1,v-q+\frac{u+v}{q-1}\}<s\le v-q+\frac{u+v}{q-1}+1.
\end{cases}
\]
Thus
\[
\text{I}=\sum_{\max\{-1,v-q+\frac{u+v}{q-1}\}<s\le v-q+\frac{u+v}{q-1}+1}\binom{u+v-(s-v+q-1)(q-1)}{(s-2v+2q)(q-1)-2(u+v)}.
\]
\end{proof}
 
\noindent{\bf Note.} The sum in \eqref{4.5} has at most one term. More precisely, when $0<u+v<q-1$,
\begin{equation}\label{4.7a}
\text{I}=0.
\end{equation}
When $q-1\le u+v<2(q-1)$,
\begin{equation}\label{4.7b}
\text{I}=
\begin{cases}
\displaystyle \binom{u+v-(q-1)}{(q+2-v)(q-1)-2(u+v)}&\text{if}\ q-2\le v\le q-1, \vspace{2mm}\cr
0&\text{if}\ v<q-2.
\end{cases}
\end{equation}

\begin{lem}\label{L4.2}
We have
\begin{equation}\label{4.6}
\text{\rm II}=\sum_{\substack{0\le s\le 2\cr \max\{0,v-s+\frac{u+v}{q-1}\}<\alpha\le \min\{q-1,v-s+\frac{u+v}{q-1}+1\}}}\binom{u+v-(s+\alpha-v-1)(q-1)}{(s+2\alpha-2v)(q-1)-2(u+v)}.
\end{equation}
\end{lem}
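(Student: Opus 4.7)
The key reduction is to simplify the weight $2^{\alpha+\beta}\binom{2(q-1)-\alpha-\beta}{q-1}$ that multiplies the inner sum in \eqref{4.3}. I would write $\binom{2(q-1)-\alpha-\beta}{q-1}$ as $\binom{q-1+(q-1)-(\alpha+\beta)}{q-1}$ and apply Lemma~\ref{L2.1} with its $\alpha$ taken to be $q-1$ and its $v$ taken to be $\alpha+\beta$; this yields $(-1)^{q-1}\binom{\alpha+\beta}{q-1}$, which in $\Bbb F_p$ equals $1$ when $\alpha+\beta=q-1$ and $0$ for $0<\alpha+\beta<q-1$, because $q$ is odd. Together with $2^{q-1}\equiv 1\pmod p$ (from Fermat, since $(p-1)\mid(q-1)$ and $p$ is odd), the whole weight reduces to the indicator $[\alpha+\beta=q-1]$.

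Setting $\beta=q-1-\alpha$ (so the surviving range is $1\le\alpha\le q-1$, which automatically gives $\beta\le q-2$), the modular condition becomes $2k-j\equiv 0\pmod{q-1}$, and, mimicking Lemma~\ref{L4.1}, I parametrize $2k-j=s(q-1)$ with $s\in\{0,1,2\}$. The Diophantine constraint $q(q-1)+k-j-(\alpha+\beta q)=n$ collapses under $\beta=q-1-\alpha$ to $k-j=n-\alpha(q-1)$. Using $n=(u+v)+v(q-1)$, the linear system in $k,j$ has the unique solution
\[
k=(s+\alpha-v)(q-1)-(u+v),\qquad j=(s+2\alpha-2v)(q-1)-2(u+v),
\]
so $q-1-k=u+v-(s+\alpha-v-1)(q-1)$, matching the top of the binomial coefficient in \eqref{4.6}. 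The sign $(-1)^j$ equals $1$ because $q-1$ is even.

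Finally, I would pin down the range of $\alpha$: the inequalities $k\ge 1$ and $k\le q-1$ translate to $v-s+\tfrac{u+v}{q-1}<\alpha\le v-s+1+\tfrac{u+v}{q-1}$, and intersecting with $1\le\alpha\le q-1$ produces precisely the bounds displayed in \eqref{4.6} (the ``$\max\{0,\cdot\}$'' form is used because $\alpha>0$ is equivalent to $\alpha\ge 1$). As in Lemma~\ref{L4.1}, the restrictions $j\ge 0$ and $k+j\le q-1$ can be relaxed because $\binom{q-1-k}{j}$ vanishes outside that range, so only zero terms are added. The main obstacle is the first step, i.e., recognizing the collapse of the weight to an indicator; once that is established, the rest of the argument is a mechanical parallel of the proof of Lemma~\ref{L4.1}.
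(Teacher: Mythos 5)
Your proposal is correct and follows essentially the same route as the paper: reduce the weight $2^{\alpha+\beta}\binom{2(q-1)-\alpha-\beta}{q-1}$ to the indicator of $\alpha+\beta=q-1$ (the paper does this by a direct base-$p$ carry argument rather than by invoking Lemma~\ref{L2.1}, but the two justifications are interchangeable), then set $\beta=q-1-\alpha$, parametrize $2k-j=s(q-1)$ with $0\le s\le 2$, and solve the resulting linear system exactly as in Lemma~\ref{L4.1}. Your explicit remarks that $2^{q-1}\equiv 1\pmod p$ and $(-1)^j=1$ (since $q-1$ is even) fill in steps the paper leaves implicit, and the range analysis for $\alpha$ matches \eqref{4.6}.
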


\begin{proof}
First note that if $0<\alpha+\beta<q-1$, then $\binom{2(q-1)-\alpha-\beta}{q-1}=0$ since the sum $(q-1)+(q-1-\alpha-\beta)$ has carry in base $p$. Thus
\begin{equation}\label{4.7}
\begin{split}
\text{II}\;&=\sum_{\substack{\alpha,\beta\ge 0;\beta\le q-2\cr \alpha+\beta=q-1}}\sum_{\substack{k\ge 1;j\ge 0\cr k+j\le q-1\cr 2k-j\equiv \alpha+\beta\kern-2mm\pmod{q-1}\cr q(q-1)+k-j-(\alpha+\beta q)=n}}
(-1)^j\binom{q-1-k}j\cr
&=\sum_{1\le \alpha\le q-1}\sum_{\substack{k\ge 1;j\ge 0\cr k+j\le q-1\cr 2k-j\equiv 0\kern-2mm\pmod{q-1}\cr k-j+\alpha(q-1)=n}}
(-1)^j\binom{q-1-k}j.
\end{split}
\end{equation}
In \eqref{4.7}, the conditions on $\alpha,k,j$ can be {\em replaced} by
\begin{equation}\label{4.8}
\begin{cases}
0<\alpha\le q-1,\cr
0<k\le q-1,\cr
2k-j=s(q-1),\qquad 0\le s\le 2,\cr
k-j+\alpha(q-1)=u+vq.
\end{cases}
\end{equation} 
Solving \eqref{4.8} for $k$ and $j$ in terms of $s$ and $\alpha$, we have
\[
\begin{cases}
k=(s+\alpha-v)(q-1)-u-v,\cr
j=(s+2\alpha-2v)(q-1)-2(u+v),\cr
0\le s\le 2, \vspace{1mm}\cr
\displaystyle\max\{0, v-s+\frac{u+v}{q-1}\}<\alpha\le\min\{q-1,v-s+\frac{u+v}{q-1}+1\}.
\end{cases}
\]
Thus 
\[
\text{II}=\sum_{\substack{0\le s\le 2\cr \max\{0,v-s+\frac{u+v}{q-1}\}<\alpha\le \min\{q-1,v-s+\frac{u+v}{q-1}+1\}}}\binom{u+v-(s+\alpha-v-1)(q-1)}{(s+2\alpha-2v)(q-1)-2(u+v)}.
\]
\end{proof}

\noindent{\bf Note.}
In \eqref{4.6}, for each $0\le s\le 2$, there is at most one $\alpha$ in the specified range. Hence the sum contains at most three terms. More precisely,
when $0<u+v<q-1$,
\begin{equation}\label{4.9a}
\begin{split}
\text{II}=\;& \sum_{0\le s\le \min\{2,v\}}\binom{u+v}{(-s+2)(q-1)-2(u+v)}\cr
=\;& \sum_{0\le s\le \min\{1,v\}}\binom{u+v}{(-s+2)(q-1)-2(u+v)}\cr
=\;&
\begin{cases}
\displaystyle \binom u{2(q-1)-2u}&\text{if}\ v=0,\vspace{2mm}\cr
\displaystyle \binom{u+v}{2(q-1)-2(u+v)}+\binom{u+v}{q-1-2(u+v)}&\text{if}\ 1\le v\le q-2.
\end{cases}
\end{split}
\end{equation}
When $u+v=q-1$,
\begin{equation}\label{4.9b}
\begin{split}
\text{II}\;&=\sum_{\max\{0,v+3-q\}\le s\le\min\{2,v+1\}}\binom 0{(-s+2)(q-1)}\cr
&=\begin{cases}
0&\text{if}\ v=0,\cr
1&\text{if}\ 1\le v\le q-1.
\end{cases}
\end{split}
\end{equation}
When $q-1< u+v<2(q-1)$,
\begin{equation}\label{4.9c}
\begin{split}
\text{II}\;&=\sum_{\max\{0,v+3-q\}\le s\le\min\{2,v+1\}}\binom{u+v-(q-1)}{(-s+4)(q-1)-2(u+v)}\cr
&=\sum_{\max\{0,v+3-q\}\le s\le 1}\binom{u+v-(q-1)}{(-s+4)(q-1)-2(u+v)}\cr
&=\begin{cases}
\displaystyle \binom{u+v-(q-1)}{4(q-1)-2(u+v)}+\binom{u+v-(q-1)}{3(q-1)-2(u+v)}&\text{if}\ 1\le v\le q-3, \vspace{2mm}\cr
\displaystyle \binom{u-1}{q+1-2u}&\text{if}\ v=q-2, \vspace{2mm}\cr
0&\text{if}\ v=q-1.
\end{cases}
\end{split}
\end{equation}

\begin{lem}\label{L4.3}
We have
\begin{equation}\label{4.9}
\begin{split}
\text{\rm III}=\;&\sum_{\substack{-1\le s\le 1\cr \max\{s-\frac{u+v}{q-1},v-q+1\}\le \epsilon<\min\{s-\frac{u+v}{q-1}+1,v\}}}(-1)^{v+\epsilon}\binom{u+2v-(s-\epsilon)(q-1)-\epsilon}{(s-2\epsilon+1)(q-1)-2u-v-\epsilon}\cr
&-\sum_{\max\{-2,v-q+\frac{u+v}{q-1}\}<s\le\min\{1,v-q+\frac{u+v}{q-1}+1\}}\binom{u+v-(s-v+q-1)(q-1)}{(s-2v+2q)(q-1)-2(u+v)}.
\end{split}
\end{equation}
\end{lem}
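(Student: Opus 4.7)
The plan is to evaluate III by the same reparametrization used for the analogous sum in \eqref{3.2} in the proof of Proposition~\ref{P3.1}, followed by Vandermonde's identity to collapse the inner sum over $\alpha$. I introduce integer parameters $s,\epsilon$ via $2k - j - (\alpha + \beta) = s(q-1)$ and $k - j - \alpha - u = \epsilon q$; combined with the equation $q(q-1) + k - j - (\alpha + \beta q) = n$, this forces $\beta = q - 1 + \epsilon - v$, and $(\beta, k, j)$ can be solved for in terms of $(\alpha, s, \epsilon)$ exactly as in \eqref{3.8}. As in Section~3, dropping the constraint $j \ge 0$ only contributes zero terms. The remaining constraints $0 < \alpha + \beta \le q - 1$ and $\beta \le q - 2$ translate to $v - q + 1 \le \epsilon \le v - 1$ and $0 \le \alpha \le v - \epsilon$, with the single pair $(\alpha,\epsilon)=(0,v-q+1)$ (i.e.\ $\alpha=\beta=0$) excluded. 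Extremal values of $2k-j$ together with $\alpha + \beta \ge 1$ then force $-1 \le s \le 1$.

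Next, by Lemma~\ref{L2.1},
\[
\binom{\alpha+\beta}{\alpha} = \binom{q-1+\alpha-(v-\epsilon)}{\alpha} = (-1)^{\alpha}\binom{v-\epsilon}{\alpha}
\]
in $\Bbb F_p$. From the explicit form $j=(s-2\epsilon+1)(q-1)-2u-v-\alpha-\epsilon$ and the evenness of $q-1$, one finds $j\equiv v+\alpha+\epsilon\pmod 2$, so $(-1)^j=(-1)^{v+\alpha+\epsilon}$; multiplying by the $(-1)^\alpha$ from Lemma~\ref{L2.1} leaves the $\alpha$-independent sign $(-1)^{v+\epsilon}$. Setting $N=u+v-(s-\epsilon)(q-1)=q-1-k$ and $M=(s-2\epsilon+1)(q-1)-2u-v-\epsilon$, the inner $\alpha$-sum becomes the Vandermonde convolution
\[
\sum_{\alpha=0}^{v-\epsilon}\binom{v-\epsilon}{\alpha}\binom{N}{M-\alpha}=\binom{N+v-\epsilon}{M},
\]
which is exactly the binomial $\binom{u+2v-(s-\epsilon)(q-1)-\epsilon}{(s-2\epsilon+1)(q-1)-2u-v-\epsilon}$ appearing in the first sum of \eqref{4.9}.

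Extending the Vandermonde to the full range $0\le\alpha\le v-\epsilon$ includes the forbidden pair $(\alpha,\epsilon)=(0,v-q+1)$, so I must subtract its contribution. The sign there is $(-1)^{v+(v-q+1)}=(-1)^{q-1}=1$ (since $q$ is odd), and substituting $\epsilon=v-q+1$ in $N$ and $M$ gives precisely the binomial $\binom{u+v-(s-v+q-1)(q-1)}{(s-2v+2q)(q-1)-2(u+v)}$ of the second sum in \eqref{4.9}; its $s$-range comes from specializing the general $(s,\epsilon)$-range to $\epsilon=v-q+1$, with any $s$ values giving a vanishing binomial absorbed into the bounds. The main obstacle will be the careful bookkeeping: simultaneously tracking $\alpha+\beta>0$, $\alpha+\beta\le q-1$, and $\beta\le q-2$; computing $(-1)^j$ correctly in odd characteristic (in contrast to Section~3, where all signs were trivial) and confirming its cancellation with the $(-1)^\alpha$ from Lemma~\ref{L2.1}; and verifying that the $s$-ranges displayed in \eqref{4.9} differ from the ranges emerging directly from the derivation only by terms in which the binomial coefficient vanishes.
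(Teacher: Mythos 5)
Your proposal is correct and follows essentially the same route as the paper: the same reparametrization by $s,\epsilon$ with $\beta=q-1+\epsilon-v$, the same application of Lemma~\ref{L2.1} to cancel the sign $(-1)^j=(-1)^{v+\epsilon+\alpha}$ down to $(-1)^{v+\epsilon}$, and the same Vandermonde completion with the single excluded term $(\alpha,\epsilon)=(0,v-q+1)$ producing the subtracted second sum. All the key computations (the bound $-1\le s\le 1$, the identification $N+v-\epsilon=u+2v-(s-\epsilon)(q-1)-\epsilon$, and the sign $+1$ at $\epsilon=v-q+1$) match the paper's proof.
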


\begin{proof}
In \eqref{4.4}, the conditions on $\alpha,\beta,k,j$ can be {\em replaced} by
\begin{equation}\label{4.10}
\begin{cases}
0\le \beta\le q-2,\cr
0<\alpha+\beta\le q-1,\cr
0<k\le q-1,\cr
2k-j-(\alpha+\beta)=s(q-1),\qquad -1\le s\le 1,\cr
k-j-\alpha-u=\epsilon q,\qquad \epsilon\in\Bbb Z,\cr
q-1-\beta-v+\epsilon=0.
\end{cases}
\end{equation}
Solving \eqref{4.10} for $k,j,\beta$ in terms of $\alpha,s,\epsilon$, we have
\[
\begin{cases}
\beta=q-1-v+\epsilon,\cr
k=(s-\epsilon+1)(q-1)-u-v,\cr
j=(s-2\epsilon+1)(q-1)-2u-v-\epsilon-\alpha,\cr
-1\le s\le 1, \vspace{1mm}\cr
\displaystyle \max\{s-\frac{u+v}{q-1},v-q+1\}\le \epsilon<\min\{s-\frac{u+v}{q-1}+1,v\}, \vspace{1mm}\cr
v-\epsilon-q+1<\alpha\le v-\epsilon.
\end{cases}
\]
Therefore,
\begin{equation}\label{4.11}
\begin{split}
\text{III}=\;&\sum_{\substack{-1\le s\le 1\cr \max\{s-\frac{u+v}{q-1},v-q+1\}\le \epsilon<\min\{s-\frac{u+v}{q-1}+1,v\}}}\cr
&\sum_{v-\epsilon-q+1<\alpha\le v-\epsilon}(-1)^{v+\epsilon+\alpha}\binom{\alpha+q-1-v+\epsilon}\alpha \binom{u+v-(s-\epsilon)(q-1)}{(s-2\epsilon+1)(q-1)-2u-v-\epsilon-\alpha}\cr
=\;&\sum_{\substack{-1\le s\le 1\cr \max\{s-\frac{u+v}{q-1},v-q+1\}\le \epsilon<\min\{s-\frac{u+v}{q-1}+1,v\}}}\cr
&\sum_{v-\epsilon-q+1<\alpha\le v-\epsilon}(-1)^{v+\epsilon}\binom{v-\epsilon}\alpha \binom{u+v-(s-\epsilon)(q-1)}{(s-2\epsilon+1)(q-1)-2u-v-\epsilon-\alpha}\cr
&\kern 9cm \text{(by Lemma~\ref{L2.1})}\cr
=\;&\sum_{\substack{-1\le s\le 1\cr \max\{s-\frac{u+v}{q-1},v-q+1\}\le \epsilon<\min\{s-\frac{u+v}{q-1}+1,v\}}}(-1)^{v+\epsilon}\left[\binom{u+2v-(s-\epsilon)(q-1)-\epsilon}{(s-2\epsilon+1)(q-1)-2u-v-\epsilon}\right.\cr
&-\sum_{0\le \alpha\le v-\epsilon-q+1}\binom{v-\epsilon}\alpha \left. \binom{u+v-(s-\epsilon)(q-1)}{(s-2\epsilon+1)(q-1)-2u-v-\epsilon-\alpha}\right]\cr
=\;&\sum_{\substack{-1\le s\le 1\cr \max\{s-\frac{u+v}{q-1},v-q+1\}\le \epsilon<\min\{s-\frac{u+v}{q-1}+1,v\}}}(-1)^{v+\epsilon}\binom{u+2v-(s-\epsilon)(q-1)-\epsilon}{(s-2\epsilon+1)(q-1)-2u-v-\epsilon}-A,
\end{split}
\end{equation}
where
\begin{equation}\label{4.11a}
\begin{split}
A=\;&\sum_{\substack{-1\le s\le 1\cr \max\{s-\frac{u+v}{q-1},v-q+1\}\le \epsilon<\min\{s-\frac{u+v}{q-1}+1,v\}}}(-1)^{v+\epsilon}\cr
&\cdot\sum_{0\le \alpha\le v-\epsilon-q+1}\binom{v-\epsilon}\alpha \binom{u+v-(s-\epsilon)(q-1)}{(s-2\epsilon+1)(q-1)-2u-v-\epsilon-\alpha}.
\end{split}
\end{equation}
Note that in \eqref{4.11a}, $v-q+1\le \epsilon$ in the outer sum; thus the inner sum is empty unless $\epsilon=v-q+1$. Therefore
\begin{equation}\label{4.12}
A=\sum_{\max\{-2,v-q+\frac{u+v}{q-1}\}<s\le\min\{1,v-q+\frac{u+v}{q-1}+1\}}\binom{u+v-(s-v+q-1)(q-1)}{(s-2v+2q)(q-1)-2(u+v)}.
\end{equation}
Equation~\eqref{4.9} follows from \eqref{4.11} and \eqref{4.12}. 
\end{proof}

\noindent{\bf Note.}
In \eqref{4.9}, the first sum has at most three terms and the second sum has at most one term. The formula for III can be made more explicit as we saw earlier in similar situations. We assume that $q>3$.
When $0<u+v<q-1$,
\begin{equation}\label{4.20}
\begin{split}
\text{III}\;&=\sum_{-1\le s\le\min\{0,v-1\}}(-1)^{v+s}\binom{u+2v-s}{(-s+1)(q-1)-2u-v-s}\cr
&=\begin{cases}
\displaystyle -\binom{u+1}{2(q-1)-2u+1}&\text{if}\ v=0,\vspace{2mm}\cr
\displaystyle(-1)^{v+1}\binom{u+2v+1}{2(q-1)-2u-v+1}+(-1)^v\binom{u+2v}{q-1-2u-v}&\text{if}\ 1\le v\le q-2.
\end{cases}
\end{split}
\end{equation}
When $q-1\le u+v<2(q-1)$,
\begin{equation}\label{4.21}
\begin{split}
\text{III}=\;&\sum_{\max\{-1,v-q+2\}\le s\le\min\{1,v\}}(-1)^{v+s+1}\binom{u+2v-q-s+2}{(-s+3)(q-1)-2u-v-s+1}\cr
&-\sum_{\max\{-1,v-q+2\}\le s\le v-q+2}\binom{u+v-(s-v+q-1)(q-1)}{(s-2v+2q)(q-1)-2(u+v)}\cr
=\;&\begin{cases}
0&\text{if}\ v=0, \vspace{3mm}\cr
\displaystyle (-1)^v\binom{u+2v-q+3}{4(q-1)\!-\!2u\!-\!v\!+\!2}+(-1)^{v+1}\binom{u+2v-q+2}{3(q-1)\!-\!2u\!-\!v\!+\!1}\vspace{2mm}\cr
\displaystyle +(-1)^v\binom{u+2v-q+1}{2(q-1)-2u-v}&\text{if}\ 1\le v\le q-3, \vspace{3mm}\cr
\displaystyle \binom{u+q-2}{2q-2u}-\binom{u+q-3}{q-2u}-\binom{u-1}{2q-2u}&\text{if}\ v=q-2, \vspace{3mm}\cr
\displaystyle \binom{u+q-1}{q-2u-1}-\binom u{q-2u-1}&\text{if}\ v=q-1.
\end{cases}
\end{split}
\end{equation}

\medskip
 
To recap, we have the following proposition.

\begin{prop}\label{P4.4}
Let $q$ be an odd prime power and let $1\le n\le q^2-2$. Then
\begin{equation}\label{4.13}
\sum_{a\in\Bbb F_q}a^nd_n(a)=-\delta_n-\text{\rm I}-\text{\rm II}+\text{\rm III},
\end{equation}
where $\delta_n$ defined in \eqref{3.2a}; {\rm I, II, III} are given by \eqref{4.5}, \eqref{4.6} and \eqref{4.9} respectively.
\end{prop}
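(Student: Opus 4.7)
The proof is essentially a synthesis of the computations already carried out in the body of Section 4, so the plan is to make explicit how the pieces fit together.

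The first step is to assemble the decomposition $\sum_{a\in\Bbb F_q}a^nd_n(a)=-\delta_n-\text{I}-\text{II}+\text{III}$. Starting from the generating-function identity \eqref{2.9} one substitutes the expression for $h(xt)\pmod{x^q-x}$ given in \eqref{4.1}, which in turn rests on Lemma~\ref{L2.3}. Expanding the product and grouping terms of three types — the isolated $t^{(q-1)^2}x^{q-1}$ piece (call this type I), the $2^{\alpha+\beta}\binom{2(q-1)-\alpha-\beta}{q-1}$ piece (type II), and the $-\binom{\alpha+\beta}{\alpha}$ piece (type III) — produces a polynomial identity in $x$ and $t$ modulo $x^q-x$. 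Summing over $x\in\Bbb F_q$ and using the standard orthogonality $\sum_{a\in\Bbb F_q}a^m$ equals $-1$ when $(q-1)\mid m$ and $m>0$, and equals $0$ otherwise, one reads off the coefficient of $t^n$: the $(t-t^{q^2})x$ term together with $\frac{t^{q^2-1}-1}{t^{q-1}-1}\sum_k t^{k+1}x^{k+1}$ contributes $-\delta_n$, while the three product terms contribute exactly $-\text{I}$, $-\text{II}$, $+\text{III}$ as defined in \eqref{4.2}, \eqref{4.3}, \eqref{4.4}.

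The second step is the explicit evaluation of I, II, III, carried out in Lemmas~\ref{L4.1}, \ref{L4.2}, \ref{L4.3}. The common recipe is to replace the mod-$(q-1)$ congruence $2k-j\equiv\alpha+\beta\pmod{q-1}$ by an equation $2k-j-(\alpha+\beta)=s(q-1)$ for an integer parameter $s$ in a small range, drop the restriction $j+k\le q-1$ (this only introduces terms with $\binom{q-1-k}{j}=0$), and then solve the resulting linear system for $k,j$ (and $\beta$) in terms of the free parameters. The interval constraints on $s$, $\alpha$, $\epsilon$ then fall out of requiring that the remaining summation variables lie in $[0,q-1]$. For III one further applies Lemma~\ref{L2.1} to rewrite $\binom{q-1+\alpha-(v-\epsilon)}{\alpha}=(-1)^\alpha\binom{v-\epsilon}{\alpha}$, then extends the $\alpha$-range to $0\le\alpha\le v-\epsilon$ (collapsing the resulting inner binomial convolution via Vandermonde) and subtracts off the overcount $A$.

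The main obstacle is the index bookkeeping in Lemma~\ref{L4.3}. Two delicate points arise: first, that the relaxation of the $j$-range truly brings in only vanishing binomials (so one gets equality, not just a congruence), and second, that the correction term $A$ in \eqref{4.11a} collapses to a single value $\epsilon=v-q+1$ — this uses the outer constraint $v-q+1\le\epsilon$, which forces the inner sum $0\le\alpha\le v-\epsilon-q+1$ to be empty except at that one boundary $\epsilon$. Once those two verifications are in hand, Proposition~\ref{P4.4} follows by direct citation of Lemmas~\ref{L4.1}, \ref{L4.2}, and \ref{L4.3}, so the ``proof'' is really just a statement that the three sums computed separately in the preceding lemmas combine via the decomposition established in Stage 1.
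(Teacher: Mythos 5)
Your proposal is correct and follows essentially the same route as the paper: the paper's ``proof'' of Proposition~\ref{P4.4} is precisely the recapitulation of the decomposition $\sum_{a\in\Bbb F_q}a^nd_n(a)=-\delta_n-\text{I}-\text{II}+\text{III}$ obtained by substituting \eqref{4.1} into \eqref{2.9}, summing over $x\in\Bbb F_q$, and reading off the coefficient of $t^n$, combined with the evaluations of I, II, III in Lemmas~\ref{L4.1}--\ref{L4.3}. Your identification of the two delicate points in Lemma~\ref{L4.3} (the harmless relaxation of the $j$-range and the collapse of the correction term $A$ to $\epsilon=v-q+1$) matches the paper's treatment exactly.
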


\begin{thm}\label{T4.5}
Let $q$ be an odd prime power and let $1\le n\le q^2-2$. Write $n=u+vq$, $0\le u,v\le q-1$, and write $3n\equiv u'+v'q>0\pmod{q^2-1}$, $0\le u'v'\le q-1$. Then
\begin{equation}\label{4.14}
\begin{split}
\sum_{a\in\Bbb F_q}d_n(a)^3=\;&3(-\delta_n-\text{\rm I}-\text{\rm II}+\text{\rm III})\cr
&+\begin{cases}
\displaystyle -2^{-u'-v'}\binom{u'+v'}{q-1}+\binom{2(q-1)\!-\!u'\!-\!v'}{q-1-u'}&\text{if}\ 1\le u'\!+\!v'q\le q^2\!-\!2, \vspace{2mm}\cr
-1&\text{if}\ u'+v'q=q^2-1.
\end{cases}
\end{split}
\end{equation}
\end{thm}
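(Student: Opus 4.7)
The plan is to assemble three tools already at hand: identity~\eqref{2.2}, Proposition~\ref{P4.4}, and the formula~\eqref{2.8} derived inside the proof of Lemma~\ref{L2.3}. I expect no conceptual difficulty, only care in bookkeeping the reduction of $3n$ modulo $q^2-1$.

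First I apply~\eqref{2.2} to split
\[
\sum_{a\in\Bbb F_q} d_n(a)^3 = \sum_{a\in\Bbb F_q} d_{3n}(a) + 3\sum_{a\in\Bbb F_q} a^n d_n(a).
\]
For $1\le n\le q^2-2$, Proposition~\ref{P4.4} gives $\sum_{a\in\Bbb F_q} a^n d_n(a) = -\delta_n-\text{\rm I}-\text{\rm II}+\text{\rm III}$, contributing exactly the first summand on the right-hand side of~\eqref{4.14}.

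To handle $\sum_{a\in\Bbb F_q} d_{3n}(a)$, I first reduce the index. I claim that for each $a\in\Bbb F_q$ the map $m\mapsto d_m(a)$ has period dividing $q^2-1$ on $m\ge 1$. Indeed, by the defining functional equation, any $a\in\Bbb F_q$ can be written as $x(1-x)$ for some $x\in\Bbb F_{q^2}$; then $d_m(a)=x^m+(1-x)^m$. If $a\ne 0$, both $x$ and $1-x$ lie in $\Bbb F_{q^2}^*$, whose orders divide $q^2-1$; if $a=0$ then $\{x,1-x\}=\{0,1\}$, so $d_m(0)=1$ for every $m\ge 1$. In either case $d_{m+q^2-1}(a)=d_m(a)$. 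Hence, writing $3n\equiv u'+v'q\pmod{q^2-1}$ with $0<u'+v'q\le q^2-1$, one has $\sum_{a\in\Bbb F_q} d_{3n}(a)=\sum_{a\in\Bbb F_q} d_{u'+v'q}(a)$.

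Finally I invoke~\eqref{2.8} with $\alpha=u'$, $\beta=v'$: this immediately produces $-2^{-u'-v'}\binom{u'+v'}{q-1}+\binom{2(q-1)-u'-v'}{q-1-u'}$ when $1\le u'+v'q\le q^2-2$, and $-1$ when $u'+v'q=q^2-1$. Adding $3(-\delta_n-\text{\rm I}-\text{\rm II}+\text{\rm III})$ then yields~\eqref{4.14}. The one point meriting attention is the convention ``$u'+v'q>0$'' in the hypothesis, which ensures that the representative of $3n\pmod{q^2-1}$ lies in $\{1,\dots,q^2-1\}$ and therefore matches the case split built into~\eqref{2.8}; this is the only place where a misreading could cost the argument.
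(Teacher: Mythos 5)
Your proposal is correct and takes essentially the same route as the paper, whose proof is the one-line citation of \eqref{2.2}, \eqref{4.13} and \eqref{2.8}; your added justification that $d_m(a)$ is periodic in $m$ with period dividing $q^2-1$ (via $a=x(1-x)$ for $x\in\Bbb F_{q^2}$) correctly fills in the reduction of $3n$ modulo $q^2-1$ that the paper leaves implicit.
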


\begin{proof}
This follows from \eqref{2.2}, \eqref{4.13} and \eqref{2.8}.
\end{proof}

\begin{cor}\label{C4.6}
In Theorem~\ref{T4.5} assume $q>3$ and $(q,n)$ is desirable. Then $u+v\ne q-1$ and in $\Bbb F_p$,
\begin{equation}\label{4.15}
3(\text{\rm I}+\text{\rm II}-\text{\rm III})=
\begin{cases}
\displaystyle -2^{-u'-v'}\binom{u'+v'}{q-1}+\binom{2(q-1)\!-\!u'\!-\!v'}{q-1-u'}&\text{if}\ 1\le u'+v'q\le q^2\!-\!2,\vspace{2mm}\cr
-1&\text{if}\ u'+v'q=q^2-1.
\end{cases}
\end{equation}
\end{cor}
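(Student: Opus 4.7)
The plan is to mirror Corollary~\ref{C3.3}: use desirability to force $\sum_{a\in\Bbb F_q}d_n(a)^3=0$, and then read off the identity \eqref{4.15} directly from Theorem~\ref{T4.5}.

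First I would establish $u+v\ne q-1$. By \cite[Theorem 2.5]{HL} (the same tool invoked in the proof of Corollary~\ref{C3.3}), desirability of $(q,n)$ with $q>3$ bounds $(n,q^2-1)$ by $3$, hence strictly less than $q-1$. In particular $(q-1)\nmid n$, and since $n\equiv u+v\pmod{q-1}$, this forces $u+v\not\equiv 0\pmod{q-1}$. Among the candidates $u+v\in\{0,q-1,2(q-1)\}$, the extremes $0$ and $2(q-1)$ are already excluded by the range $1\le n\le q^2-2$ (they would force $n=0$ or $n=q^2-1$), so the only residual possibility $u+v=q-1$ must also be ruled out. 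This is the first conclusion of the corollary, and as a bonus $\delta_n=0$.

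Next, since $3\le q-2$ for odd $q\ge 5$, the permutation criterion recalled in the introduction yields $\sum_{a\in\Bbb F_q}d_n(a)^3=0$. Substituting this into Theorem~\ref{T4.5} makes the left side vanish; with $\delta_n=0$ the $3\delta_n$ term disappears on the right, leaving
\[
3(-\text{I}-\text{II}+\text{III})+\bigl(\text{piecewise correction}\bigr)=0,
\]
where the correction term is precisely the piecewise expression appearing in \eqref{4.14}. Moving it to the other side and flipping the sign of the triple $(\text{I},\text{II},\text{III})$ reproduces \eqref{4.15} verbatim.

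I do not foresee a substantial obstacle in this corollary. The only step that requires external input is the exclusion of $u+v=q-1$ via the divisibility bound in \cite[Theorem 2.5]{HL}; once this is in hand, what remains is a one-line rearrangement of Theorem~\ref{T4.5} under the single scalar relation $\sum_a d_n(a)^3=0$. The only point needing a small amount of care is to preserve the two-case split (for $1\le u'+v'q\le q^2-2$ versus $u'+v'q=q^2-1$), so that the matching two-case statement of \eqref{4.15} emerges correctly.
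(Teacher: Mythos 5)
Your proposal is correct and follows essentially the same route as the paper: desirability forces the gcd of $n$ with $q-1$ to be at most $3<q-1$ (the paper cites Theorems 2.3 and 2.4 of \cite{HL} rather than Theorem 2.5, but the content is the same), which rules out $u+v=q-1$ and makes $\delta_n=0$, and then \eqref{4.15} is just \eqref{4.14} rearranged under $\sum_{a\in\Bbb F_q}d_n(a)^3=0$. No gaps.
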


\begin{proof}
By Theorems 2.3 and 2.4 of \cite{HL}, $(n,q-1)\le 3<q-1$. Thus $u+v\ne q-1$. Now \eqref{4.15} follows from \eqref{4.14}.
\end{proof}



\begin{thebibliography}{99}

\bibitem{Dillon02}
J. F. Dillon, {\it Geometry, codes and difference sets: exceptional connections}, Codes and Designs (Columbus, OH, 2000), 73 -- 85, Ohio State Univ. Math. Res. Inst. Publ., 10, de Gruyter, Berlin, 2002.

\bibitem{Hou10}
X. Hou, {\it Two classes of permutation polynomials over finite fields}, J. Combin. Theory Ser. A, {\bf 118} (2011), 448 -- 454. 

\bibitem{HL}
X. Hou and T. Ly,
{\it Necessary conditions for reversed Dickson polynomials to be permutational}, Finite Fields Appl., {\bf 16} (2010), 436 -- 448.

\bibitem{HMSY}
X. Hou, G. L. Mullen, J. A. Sellers, J. Yucas,
{\it Reversed Dickson polynomials over finite fields}, Finite Fields Appl. {\bf 15} (2009) 748 -- 773.

\bibitem{LMT} R. Lidl, G. L. Mullen, and G. Turnwald, {\it Dickson
Polynomials}, Longman Scientific and Technical, Essex, United Kingdom, 1993.

\end{thebibliography}
\end{document}